\newtheorem{theorem}{Theorem}[section]
\newtheorem{lemma}[theorem]{Lemma}
\newtheorem{conjecture}[theorem]{Conjecture}
\theoremstyle{definition}
\newtheorem{definition}[theorem]{Definition}
\newcommand{\tr}{\mathrm{Tr}}
\renewcommand{\v}{r}
\renewcommand{\L}{\mathcal{L}}
\newcommand{\E}{\mathbb{E}}
\newcommand{\R}{\mathbb{R}}
\newcommand{\norm}[1]{\left\| #1 \right\|}
\newcommand{\setof}[1]{\left\{ #1 \right\}}
\newcommand{\expec}[2]{{\operatorname{\mathbb{E}}\displaylimits}_{#1}  #2 }
\newcommand\trace[1]{\mathrm{Tr} \left[#1 \right]}
\newcommand{\mydet}[1]{\det\left(#1\right)}
\newcommand{\charp}[2]{\chi \left[#1 \right] \left(#2 \right)}
\newcommand{\mixed}[2]{\mu \left[#1 \right] \left(#2 \right)}
\def\ceil#1{\left\lceil #1 \right\rceil}
\def\sizeof#1{\left|#1  \right|}
\newcommand{\imag}[1]{\mathsf{Im} (#1)}
\newcommand{\Complex}[1]{\mathbb{C}^{#1}}
\newcommand{\C}{\mathbb{C}}
\newcommand{\Reals}[1]{\mathbb{R}^{#1}}
\newcommand{\smin}{\mathrm{smin}}
\newcommand{\smaxx}{\mathrm{smax}}
\newcommand{\smax}{\overrightarrow{\mathrm{smax}}}
\def\pleq{\preceq}
\def\Branden{Br\"{a}nd\'{e}n}
\theoremstyle{definition}
\title[Ramanujan Graphs and the Solution
of the Kadison--Singer Problem]{Ramanujan Graphs and the Solution
of the Kadison--Singer Problem}
\author[Adam W. Marcus, Daniel A. Spielman, Nikhil Srivastava]{
Adam W. Marcus\thanks{Research partially supported by an NSF Mathematical Sciences Postdoctoral Research Fellowship, Grant No. DMS-0902962.}
\qquad
Daniel A. Spielman\thanks{Research partially supported by NSF grants CCF-0915487 and  CCF-1111257, a Simons Investigator Award, and a MacArthur Fellowship.}
\qquad
Nikhil Srivastava
}
\begin{document}

%

\maketitle

\begin{abstract}
We survey the techniques used in our recent resolution of the  Kadison--Singer problem 
  and proof of existence of Ramanujan Graphs of every degree: 
  mixed characteristic polynomials and 
  the method of interlacing families of polynomials.
To demonstrate the method of interlacing families of polynomials, we give a simple proof of 
  Bourgain and Tzafriri's restricted invertibility principle in the isotropic case.
\end{abstract}

\begin{classification}
Primary, 05C50, 46L05; Secondary, 26C10.
\end{classification}

\begin{keywords}
Interlacing polynomials, Kadison--Singer, mixed characteristic polynomials,
Ramanujan graphs, mixed discriminants, restricted invertibility.
\end{keywords}

\section{Introduction}\label{sec:outline}
In a recent pair of papers \cite{IF1,IF2}, we prove the existence of infinite families of bipartite Ramanujan graphs of every degree and we
  affirmatively resolve the Kadison--Singer Problem.
The techniques that we use in the papers are closely related.
In both we must show that certain families of matrices contain particular matrices of small norm.
In both cases, we prove this through a new technique that we call the \textit{method of interlacing families of polynomials}.
In the present survey, we review this technique and the polynomials that we analyze with it, the
  \textit{mixed characteristic polynomials}.

We begin by defining Ramanujan Graphs, explaining the Kadison--Singer Problem,
  and explaining how these problems are related.
In particular, we connect the two by demonstrating how they are both related to the
  problem of sparsifying graphs.

\subsection{Ramanujan Graphs}\label{ssec:raman}
Let $G$ be an undirected graph with vertex set $V$ and edge set $E$.
The {\em adjacency matrix} of $G$ is the symmetric matrix $A$ whose rows and columns are indexed
  by vertices in $V$ with entries
\[
  A (a,b) = \begin{cases}
1 & \text{if $(a,b) \in E$}
\\
0  & \text{otherwise}.
\end{cases}
\]
Since $A$ is symmetric it has $|V|$ real eigenvalues, which we will also refer to as the {\em eigenvalues of} $G$.

Consider a function $f:V\rightarrow \mathbb{R}$.  
Multiplication by $A$ corresponds to
  the operator that replaces the value of $f$ at a given vertex with the
  sum of the values at its neighbors in $G$.  
In this way, $A$ is related to random walks and diffusion on $G$.
It is well known that the speed of the convergence of these processes
  is determined by the eigenvalues of $A$ and related matrices.

We will restrict our attention to graphs that are connected and $d$-regular.
When $|V|$ is finite, it is easy to check that every such graph has an
eigenvalue of $d$ corresponding to the eigenvector of all $1$'s.
Furthermore, in the case that $G$ is bipartite,
one can check that the eigenvalues of $A$ are
symmetric about the origin.  Thus every finite bipartite $d$-regular
graph must also have an eigenvalue of $-d$.  Because these
eigenvalues are unavoidable (they are an artifact of being finite),
they are often referred to as the {\em trivial eigenvalues}.

The graphs on which random walks mix the fastest are those 
  whose non-trivial eigenvalues are as small as possible.
An infinite family of connected $d$-regular graphs
  all of whose non-trivial eigenvalues are at most $\alpha$
  for some constant $\alpha < d$
  is called a family of \textit{expander graphs}.
Constructing $d$-regular expanders with a small
  number of vertices (relative to $d$) is easy: for example, the
  complete graph on $d+1$ vertices has all non-trivial eigenvalues $-1$
  and the complete bipartite graph with $2d$ vertices has all
  non-trivial eigenvalues $0$.  
The interesting problem is to construct
  $d$-regular expanders with an arbitrarily large number of
  vertices.
Margulis \cite{Margulis73} was the first to find an explicit construction
  of such an infinite family.

Expander graphs have proved to be incredibly useful in a variety of contexts.
We refer the reader who is interested in learning more about expander
  graphs, with a focus on their applications in computer science, to the
  survey of Hoory, Linial, and Wigderson~\cite{ExpanderSurvey}.  
Many applications of expanders depend upon the magnitudes of their non-trivial eigenvalues.
A theorem of Alon and Boppana provides a bound on how small the non-trivial
  eigenvalues can be.

\begin{theorem}[\cite{alon86,nilli}]\label{thm:AlonBoppana}
For every integer $d \geq 3$ and every $\epsilon > 0$, there exists an $n_{0}$ so that 
  every $d$-regular graph $G$ with more than $n_{0}$ vertices has a non-trivial eigenvalue that is greater than $2 \sqrt{d-1} - \epsilon$.
\end{theorem}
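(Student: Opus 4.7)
The plan is to exhibit, for every $\epsilon>0$ and every sufficiently large $d$-regular graph $G$, a function on $V$ whose Rayleigh quotient against $A$ exceeds $2\sqrt{d-1}-\epsilon$ and which is orthogonal to the trivial eigenvectors. By the Courant--Fischer min--max principle, this will produce a non-trivial eigenvalue of at least that size. I prefer the test-function approach over the trace method because it delivers a \emph{positive} eigenvalue directly, which matches what the statement demands.

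First I would locate several well-separated vertices. Because every ball of radius $r$ in a $d$-regular graph contains at most $1+d+d(d-1)+\cdots+d(d-1)^{r-1}$ vertices, choosing $n_0$ of order $(d-1)^{2k+2}$ forces any graph with more than $n_0$ vertices to contain a pair (in the non-bipartite case) or triple (in the bipartite case, where one must also avoid the $-d$ eigenvector) of vertices at pairwise distance at least $2k+2$. Call these vertices $x_1,\ldots,x_m$ with $m\in\{2,3\}$, where $k=k(\epsilon)$ will be chosen large. On each ball $B(x_i,k)$ I build a test function $f_i$ of the form $f_i(u)=\phi_{\mathrm{dist}(u,x_i)}$, where the profile $\phi_j$ mimics the top eigenfunction of $A$ on the $d$-regular infinite tree $T_d$; the workable choice is $\phi_j=(d-1)^{-j/2}$, tapered linearly to zero on an outer annulus.

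Next I would estimate $f_i^T A f_i / f_i^T f_i$ for each $i$. Decomposing the numerator $2\sum_{(a,b)\in E} f_i(a)f_i(b)$ according to the distance shells $V_j(x_i)$ and using that $|V_{j+1}|\leq (d-1)|V_j|$, standard arithmetic yields $f_i^T A f_i / f_i^T f_i \geq 2\sqrt{d-1}\bigl(1-O(1/k)\bigr)$. Because the balls $B(x_i,k+1)$ are pairwise disjoint, the cross products $f_i^T A f_j$ vanish for $i\neq j$, so for any linear combination $f=\sum c_i f_i$ one has $f^T A f / f^T f \geq \min_i f_i^T A f_i / f_i^T f_i$. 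The $m$-dimensional span of the $f_i$ must then contain a non-zero vector orthogonal to all $m-1$ trivial eigenvectors, and that vector witnesses a non-trivial eigenvalue exceeding $2\sqrt{d-1}-\epsilon$ once $k$ is large enough.

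The main technical obstacle is the Rayleigh quotient estimate. The comparison with $T_d$ would be exact if each $B(x_i,k)$ were a genuine ball in the tree, but in $G$ there may be additional edges closing cycles inside the ball, and there are also edges leaving the ball at distance $k$; both kinds of edges can in principle spoil the bound. The standard remedy, due to Nilli, is the outer-annulus tapering: replace $\phi_j$ on the deepest layers by a linearly decreasing profile so that the boundary edges contribute zero and the cycle-closing edges contribute only a lower-order term. Choosing the annulus to occupy a constant fraction of $k$ makes the total loss $O(1/k)$, which is absorbed into $\epsilon$ once $k$ is taken large enough.
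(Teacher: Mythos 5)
First, note that the paper does not prove Theorem~\ref{thm:AlonBoppana} at all: it is quoted with citations to Alon and to Nilli, so the only fair comparison is with the classical argument, and your plan is indeed the standard Nilli test-function proof (disjoint balls around far-apart vertices, a radial profile modeled on the tree, orthogonalize against the trivial eigenvectors, Courant--Fischer). The combinatorial setup is fine: the ball-volume bound does force well-separated vertices, disjointness of the balls at distance $\ge 2$ does kill the cross terms $f_i^TAf_j$, and the dimension count does produce a vector orthogonal to the trivial eigenvectors; the triple in the bipartite case is actually unnecessary (your Rayleigh quotient is positive, so the eigenvalue you produce cannot be $-d$), but it is harmless.

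The genuine gap is in the central estimate, exactly the step you flag. Lower-bounding the adjacency numerator $2\sum_{(a,b)\in E}f(a)f(b)$ shell by shell, with only the upper bound $|V_{j+1}|\le (d-1)|V_j|$ in hand, does not yield $2\sqrt{d-1}\,(1-O(1/k))$: an upper bound on shell sizes cannot lower-bound edge counts, and the only generic lower bound (each vertex of $V_{j+1}$ has at least one edge back to $V_j$) gives a quotient of the form $2\sqrt{d-1}\bigl(1-\phi_0^2/\sum_j |V_j|\phi_j^2\bigr)$, whose loss is a constant, not $O(1/k)$, whenever the spheres stop growing (which is perfectly possible in a $d$-regular graph, e.g.\ long prism-like tubes). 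Your proposed remedies also aim at the wrong enemy: in the adjacency form, cycle-closing edges only add nonnegative terms and cannot spoil anything, and the linear taper does nothing about the unmatched mass at the center. The classical way to close this is to bound $d-f^TAf/f^Tf$ via the Laplacian form $\sum_{(u,v)\in E}(f(u)-f(v))^2$: since $f$ is radial, intra-shell edges drop out automatically; each vertex of $V_j$ sends at most $d-1$ edges outward (root at an \emph{edge} rather than a vertex so this holds at level $0$ too), and for the pure profile $\phi_j=(d-1)^{-j/2}$ one has the exact identity $(d-1)(\phi_j-\phi_{j+1})^2=(d-2\sqrt{d-1})\phi_j^2$, so the only excess is the single boundary term $(2\sqrt{d-1}-1)\,|V_k|(d-1)^{-k}$; here is where $|V_{j+1}|\le(d-1)|V_j|$ really enters, since it makes $|V_j|(d-1)^{-j}$ non-increasing and hence the boundary term at most a $\frac{1}{k+1}$ fraction of $\|f\|^2$. (Alternatively, the taper can replace that last observation, but again only inside the Laplacian accounting.) With that substitution your outline becomes the complete standard proof.
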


The number $2\sqrt{d-1}$ in Theorem~\ref{thm:AlonBoppana} has a
  meaning: it is the spectral radius of the infinite $d-$regular tree, whose
  spectrum is the closed interval $[-2\sqrt{d-1}, 2\sqrt{d-1}]$
   (it has no trivial eigenvalues
  because it is not finite) \cite{ExpanderSurvey}.
Since Theorem~\ref{thm:AlonBoppana} says that no infinite family of
  $d$-regular graphs can have eigenvalues that are asymptotically
  smaller than $2\sqrt{d-1}$, we may view this infinite tree as being the
  ``ideal'' expander.
A natural question is whether there exist infinite families of finite $d$-regular
  graphs whose eigenvalues are actually as small as those of the tree.

Lubotzky, Phillips and Sarnak~\cite{LPS}
  and Margulis~\cite{Margulis} were the first to construct infinite
  families of such graphs.  Their constructions were Cayley graphs, and
  they exploited the algebraic properties of the underlying groups to prove
  that all of the nontrivial eigenvalues of their graphs have absolute value at 
  most $2\sqrt{d-1}$.
Their proofs required the proof of the Ramanujan Conjecture, 
 and so they named the graphs they obtained
  {\em Ramanujan graphs}.  
As of 2013, all known infinite families of Ramanujan graphs were obtained
   via constructions similar to
  \cite{LPS, Margulis}.
As a result, all known families of Ramanujan graphs had degree $p^k + 1$ for $p$ a prime and
  $k$ a positive integer.

The main theorem of \cite{IF1} is that there exist infinite families of $d$-regular 
  bipartite Ramanujan graphs for every integer $d \geq 3$.
This is achieved by proving a variant of a conjecture of Bilu and Linial~\cite{BiluLinial},
  which implies that every $d-$regular Ramanujan graph 
  has a $2-$cover which is also Ramanujan, immediately establishing the existence of an
  infinite sequence.
In contrast to previous results, the proof is completely elementary, and we will
  sketch most of it in this survey.

Bilu and Linial's conjecture is a purely linear algebraic statement about 
  {\em signings} of adjacency matrices. 
To define a signing, recall that we can write the adjacency matrix of any graph
  $G=(V,E)$ as 
$$ A = \sum_{(a,b)\in E} A_{(a,b)},$$
where $A_{(a,b)}$ is the adjacency matrix of a single edge $(a,b)$.
Then, a signing is any matrix of the form
  $$\sum_{(a,b)\in E} s_{(a,b)}A_{(a,b)},$$
where $s_{(a,b)}\in \{-1,+1\}$ are signs.
A graph with $m$ edges has exactly $2^m$ signings.

Bilu and Linial conjectured that every $d-$regular adjacency matrix $A$ has a
  signing $A_s$ with $\|A_s\|\le 2\sqrt{d-1}$.
We prove the following weaker statement, which is equivalent to their conjecture
  in the bipartite case, as in this case the eigenvalues are symmetric about zero.


\begin{theorem}\label{thm:IF1}
Every $d-$regular adjacency matrix $A$ has a signing $A_s$ with
$$\lambda_{max}(A_s)\le 2\sqrt{d-1}.$$
\end{theorem}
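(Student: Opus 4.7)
The plan is to combine three ingredients: the method of interlacing families of polynomials (which reduces the problem to bounding the largest root of a single average polynomial), an identification of the relevant average polynomial with a classical combinatorial object (the matching polynomial of $G$), and a classical root bound for that object (Heilmann--Lieb).

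First, I would associate to each signing $s \in \{-1,+1\}^m$ its characteristic polynomial $p_s(x) = \det(xI - A_s)$, and to each partial assignment $(s_1,\dots,s_k)$ the conditional expectation
$$p_{s_1,\dots,s_k}(x) = \mathbb{E}_{s_{k+1},\dots,s_m}[\det(xI - A_s)],$$
where the remaining $s_i$ are independent uniform $\pm 1$. I would show that the collection $\{p_s\}_{s \in \{\pm 1\}^m}$ is an \emph{interlacing family} in the sense that for every partial assignment the two children $p_{s_1,\dots,s_k,+1}$ and $p_{s_1,\dots,s_k,-1}$ share a common interlacer, a condition which is equivalent to real-rootedness of every convex combination of the pair, and in particular of their average $p_{s_1,\dots,s_k}$. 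The general interlacing families lemma then produces a signing $s^\ast$ for which the largest root of $p_{s^\ast}$ is at most the largest root of the fully averaged polynomial $\mathbb{E}_s[\det(xI - A_s)]$.

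Second, I would compute this fully averaged polynomial in closed form. Expanding the determinant as a sum over permutations and taking the expectation kills every monomial in the $s_{(a,b)}$ that has some variable appearing to an odd power. What survives is exactly the sum over permutations whose non-fixed part is a product of transpositions coming from edges of $G$, and one recognizes this as the \emph{matching polynomial}
$$\mu_G(x) = \sum_{k \ge 0} (-1)^k m_k(G)\, x^{n - 2k},$$
where $m_k(G)$ is the number of $k$-matchings of $G$. This is the classical Godsil--Gutman identity. I would then invoke Heilmann--Lieb: for any graph of maximum degree $d$, the matching polynomial $\mu_G$ is real-rooted with all roots in $[-2\sqrt{d-1},\,2\sqrt{d-1}]$. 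Combined with the interlacing families conclusion, this yields a signing $s^\ast$ with $\lambda_{\max}(A_{s^\ast}) \le 2\sqrt{d-1}$.

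The main obstacle is verifying the interlacing family property, i.e.\ establishing real-rootedness of every partial expectation $p_{s_1,\dots,s_k}$. The classical bound on the roots of $\mu_G$ and the Godsil--Gutman expansion are standard, so the genuinely new work is to show that averaging the characteristic polynomials of $A_s$ over an arbitrary subset of the sign variables does not destroy real-rootedness. I would attack this by recasting each characteristic polynomial as a determinantal expression amenable to the real-stable polynomial calculus: write $A_s = \sum_{(a,b) \in E} s_{(a,b)} A_{(a,b)}$, express $\det(xI - A_s)$ as a differential or determinantal operator applied to a manifestly real-stable seed polynomial, and exploit the fact that the operator $f(z) \mapsto \mathbb{E}_{s = \pm 1} f(sz)$ (evaluation plus averaging over signs) preserves real stability on the appropriate half-plane. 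Iterating this one edge at a time produces, after restriction to the diagonal $z_{(a,b)} = 1$, the desired family of real-rooted conditional expectations, closing the argument.
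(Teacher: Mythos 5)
Your high-level route is exactly the paper's: reduce, via an interlacing-family argument over partial assignments of the signs, to the largest root of the fully averaged characteristic polynomial; identify that average with the matching polynomial $\mu_G$ by Godsil--Gutman; and finish with the Heilmann--Lieb bound $2\sqrt{d-1}$. Those last two ingredients are used verbatim in Section~\ref{sec:matchingpoly}, and your reduction step is the content of Theorem~\ref{thm:comparison} (with $k=1$), leading to \eqref{eqn:goodsigning}.

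The gap is in the step you yourself identify as the crux: real-rootedness of the conditional expectations $p_{s_1,\dots,s_k}$. The mechanism you propose does not work as stated. First, averaging over a sign is not a real-rootedness- or stability-preserving operation: already in one variable, $f(z)=(z-1)(z-2)$ is real-rooted while $\frac{1}{2}\left(f(z)+f(-z)\right)=z^2+2$ is not, so you cannot simply iterate an operator $f\mapsto \E_{s=\pm 1}f(sz)$ edge by edge. Second, the natural seed polynomial $\det\bigl(xI-\sum_{(a,b)}z_{(a,b)}A_{(a,b)}\bigr)$ is not covered by the determinantal stability lemma (Lemma~\ref{lem:det}), because the single-edge matrices $A_{(a,b)}$ are indefinite (nonzero eigenvalues $\pm 1$), not positive semidefinite; e.g.\ $\det(xI+sM)$ with $M=\mathrm{diag}(1,-1)$ equals $x^2-s^2$, which vanishes at $x=s=i$ and hence is not real stable. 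The paper's way around both obstacles is the rank-one substitution \eqref{eqn:signedlaplacian}: writing $s_{(a,b)}A_{(a,b)}=(e_a+s_{(a,b)}e_b)(e_a+s_{(a,b)}e_b)^T-e_ae_a^T-e_be_b^T$ turns each sign choice into a choice between the two rank-one PSD updates $r_{(a,b)}r_{(a,b)}^T$ of \eqref{eqn:rabdef}, up to the deterministic shift $-dI$. After this rewriting, every partial average is (a shift of) a mixed characteristic polynomial with positive semidefinite covariance matrices, whose real-rootedness follows from Lemma~\ref{lem:det} together with the Lieb--Sokal closure property (Theorem~\ref{thm:mixedreal}); this is precisely how Theorem~\ref{thm:comparison} is proved, and the rank-one structure is essential --- the machinery fails for rank-two updates, which is what raw signed edges are. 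Without this substitution (or an equivalent device), your interlacing-family step is not established, and the rest of the argument has nothing to stand on.
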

This is a statement about the existence of a certain sum of rank two matrices of type $s_{(a,b)}A_{(a,b)}$,
  but it it useful to rewrite it as a statement about a sum of rank one matrices by making the substitution
$$ s_{(a,b)}A_{(a,b)}= (e_a+ s_{(a,b)}e_b)(e_a+s_{(a,b)}e_b)^T-e_ae_a^T-e_be_b^T,$$
where $e_a$ is the standard basis vector with a $1$ in position $a$.
For a $d-$regular graph, we now have
\begin{equation}\label{eqn:signedlaplacian} A_s=\sum_{(a,b)\in E} s_{(a,b)}A_{(a,b)}= 
  \sum_{(a,b)\in E} (e_a + s_{(a,b)}e_b)(e_a+s_{(a,b)}e_b)^T 
  - dI.\end{equation}
So, Theorem~\ref{thm:IF1} is equivalent to the statement that there
  is a choice of $s_{(a,b)}$ for which 
$$\lambda_{max}\left(\sum_{(a,b)\in E} (e_a + s_{(a,b)}e_b)(e_a+s_{(a,b)}e_b)^T
\right)\le d+2\sqrt{d-1}.$$
The existence of such a choice can be written in probabilistic terms by defining
  for each $(a,b)\in E$ a random vector 
\begin{equation}\label{eqn:rabdef} r_{(a,b)} :=
\begin{cases}
  (e_a+e_b) &          \textrm{with probability $1/2$ and}
\\
 (e_a-e_b)  & \textrm{with probability
  $1/2$}.
\end{cases}
\end{equation}
Notice that 
\begin{equation}\label{eqn:signisotropic}\E \sum_{(a,b)\in E}
r_{(a,b)}r_{(a,b)}^T=dI.\end{equation}
Thus, Theorem \ref{thm:IF1} is equivalent to the statement that for every
  $d-$regular $G=(V,E)$, 
\begin{equation}\label{eqn:IF1}\lambda_{max}\left(\sum_{(a,b)\in E} r_{(a,b)}r_{(a,b)}^T\right)\le
  \lambda_{max}\left(\E \sum_{(a,b)\in E} r_{(a,b)}r_{(a,b)}^T\right) +2\sqrt{d-1}\end{equation}
  with positive probability.

Such a sum may be analyzed using tools of random matrix theory, but this
  approach does not give the sharp bound we require, and it is known that it
  cannot in general as there are graphs for which the desired signing
  is exponentially rare (consider a union of disjoint cliques on $d$ vertices).

The main subject of this survey is an approach that succeeds in
  proving \eqref{eqn:IF1} exactly.
The methodology also succeeds in resolving several other important questions
  about sums of independent random rank one matrices, including Weaver's
  conjecture and thereby the Kadison--Singer problem.
We review these first and describe their connection to Ramanujan graphs before
  proceeding to describe the actual technique.
The proof of \eqref{eqn:IF1} and Theorem \ref{thm:IF1} will be sketched in Section
  \ref{sec:matchingpoly}.

\subsection{Sparse Approximations of Graphs}\label{ssec:approx}
Spielman and Teng~\cite{SpielmanTengSparse} observed that one can view an expander graph
  as an approximation of a complete graph, and 
  asked if one could find analogous approximations of arbitrary graphs.
In this context, it is more natural to consider the class of general weighted graphs rather than
  just unweighted $d-$regular graphs, and to study the 
  {\em Laplacian matrix} instead of the adjacency matrix.
Recall that the Laplacian of a weighted graph $G=(V,E,w)$ may be defined as the following sum of
  rank one matrices over the edges:
\[
  L_{G} = \sum_{(a,b) \in E} w_{(a,b)} (e_{a} - e_{b}) (e_{a} - e_{b})^{T}.
\]
In the unweighted $d-$regular case, it is easy to see that $L=dI-A$, so the
eigenvalues of the Laplacian are just $d$ minus the eigenvalues of the adjacency
  matrix.
The Laplacian matrix of a graph always has an eigenvalue of $0$; this is a 
  trivial eigenvalue, and the corresponding eigenvectors are the constant vectors.

Following Spielman and Teng, we say that two graphs $G$ and $H$ on the same
  vertex set $V$ are {\em spectral approximations} of each other if their
  Laplacian quadratic forms multiplicatively approximate each other:
  $$ \kappa_1\cdot x^TL_Hx\leq x^TL_Gx\leq \kappa_2 \cdot x^TL_Hx\qquad\forall x\in\R^V,$$
for some approximation factors $\kappa_1,\kappa_2>0$. We will write this as
	$$ \kappa_1\cdot L_H \pleq L_G\pleq \kappa_2\cdot L_H,$$
where $A\pleq B$ means that $B-A$ is positive semidefinite, i.e., $x^T(B-A)x\ge
  0$ for every $x$.

The complete graph on $n$ vertices, $K_{n}$, is the graph with an edge of weight $1$ between every pair of vertices.
All of the eigenvalues of $L_{K_{n}}$ other than $0$ are equal to $n$.
If $G$ is a $d$-regular non-bipartite Ramanujan graph, then $0$ is the trivial 
  eigenvalue of its Laplacian matrix, $L_{G}$,
  and all of the other eigenvalues of $L_{G}$ are between $d - 2 \sqrt{d-1}$ and $d + 2 \sqrt{d-1}$.
After a simple rescaling, this allows us to conclude that
\[
  (1 - 2 \sqrt{d-1}/d) L_{K_{n}} 
  \pleq (n/d) L_{G} 
  \pleq  (1 + 2 \sqrt{d-1}/d)  L_{K_{n}} .
\]
So, $(n/d) L_{G}$ is a good approximation of $L_{K_{n}}$.

Batson, Spielman and Srivastava proved that every weighted graph has an approximation that is almost this good.
\begin{theorem}[\cite{BSS}]\label{thm:BSSgraphs}
For every $d>1$
  and every weighted graph $G=(V,E,w)$ on $n$ vertices,
  there exists a weighted graph $H=(V,F,\tilde{w})$ with $\ceil{d(n-1)}$
  edges that satisfies:
\begin{equation}\label{eqn:BSSgraph}
\left(1-\frac{1}{\sqrt{d}}\right)^2 L_G  \pleq L_{H} \pleq
\left(1+\frac{1}{\sqrt{d}}\right)^2 L_{G}.
\end{equation}
\end{theorem}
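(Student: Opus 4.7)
The plan is to adapt the \emph{barrier function method} of Batson--Spielman--Srivastava. I first reduce to an isotropic problem: writing $L_G = \sum_{e=(a,b) \in E} v_e v_e^T$ with $v_e := \sqrt{w_e}\,(e_a - e_b)$, let $\Pi$ denote the orthogonal projection onto the range of $L_G$ (which has dimension $m := n-1$ when $G$ is connected; disconnected $G$ reduces componentwise). The vectors $u_e := L_G^{+/2} v_e$ then satisfy $\sum_e u_e u_e^T = \Pi$, and it suffices to find nonnegative weights $s_e$, only $\lceil dm \rceil$ of them nonzero, with
\[ (1 - 1/\sqrt{d})^2\, \Pi \pleq \sum_e s_e u_e u_e^T \pleq (1 + 1/\sqrt{d})^2 \, \Pi, \]
because conjugating back by $L_G^{+/2}$ and setting $H := \sum_{s_e > 0} s_e w_e (e_a-e_b)(e_a-e_b)^T$ exhibits the desired sparsifier with at most $\lceil d(n-1) \rceil$ edges.

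I would then build this sum one rank-one term at a time. For a symmetric matrix $A$ with $\ell < \lambda_{\min}(A|_{\mathrm{range}\,\Pi})$ and $u > \lambda_{\max}(A)$, define the upper and lower barrier potentials
\[ \Phi^u(A) := \tr\!\left[(uI - A)^{-1}\Pi\right] \AND \Phi_\ell(A) := \tr\!\left[(A - \ell I)^{-1}\Pi\right]. \]
Starting from $A_0 := 0$ with barriers $\ell_0 < 0 < u_0$ chosen so that $\Phi^{u_0}(A_0) \leq \epsilon_U$ and $\Phi_{\ell_0}(A_0) \leq \epsilon_L$ for parameters $\epsilon_U, \epsilon_L$ depending only on $d$, inductively produce $A_i$ and shifted barriers $u_i := u_0 + i \delta_U$, $\ell_i := \ell_0 + i \delta_L$ with $\delta_U := 1 + 1/\sqrt{d}$ and $\delta_L := 1 - 1/\sqrt{d}$, maintaining $\Phi^{u_i}(A_i) \leq \epsilon_U$ and $\Phi_{\ell_i}(A_i) \leq \epsilon_L$ at every step.

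The crux is the update step: show that there always exist an edge $e$ and $t > 0$ so that $A_{i+1} := A_i + t\, u_e u_e^T$ preserves both invariants. The Sherman--Morrison identity translates ``$\Phi^{u_{i+1}}(A_{i+1}) \leq \epsilon_U$'' into $t \geq U_{A_i}(u_e)$ and ``$\Phi_{\ell_{i+1}}(A_{i+1}) \leq \epsilon_L$'' into $t \leq L_{A_i}(u_e)$ for explicit rational functions $U, L$ involving the quadratic forms $u_e^T (u_i I - A_i)^{-k} u_e$ and $u_e^T (A_i - \ell_i I)^{-k} u_e$ for $k = 1, 2$. A valid $t$ exists for some $e$ whenever $\sum_e L_{A_i}(u_e) \geq \sum_e U_{A_i}(u_e)$, and using the isotropy relation $\sum_e u_e u_e^T = \Pi$ both sums collapse to trace expressions in the resolvents together with the potentials. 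The main obstacle is calibrating $\epsilon_U, \epsilon_L, \delta_U, \delta_L$ so that this trace inequality holds uniformly for every admissible state $(A_i, u_i, \ell_i)$; this is a short but delicate algebraic computation. After $\lceil dm \rceil$ iterations, the spectrum of the final $A$ on $\mathrm{range}(\Pi)$ is squeezed into $(\ell_{\lceil dm\rceil}, u_{\lceil dm\rceil})$, and with the parameters above this interval has endpoints in ratio $(1+1/\sqrt{d})^2 : (1-1/\sqrt{d})^2$, yielding the theorem after a single rescaling.
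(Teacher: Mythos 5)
Your proposal is a correct outline of the barrier-function argument from \cite{BSS}, which is precisely the proof this survey is citing; the survey itself only records the one-line deduction from Theorem~\ref{thm:BSSmatrices} (``take $V = L_G$ and $\epsilon = 1/\sqrt d$''). One nice point in your version: applying Theorem~\ref{thm:BSSmatrices} literally to the $n\times n$ matrix $L_G$ would only give $\ceil{dn}$ nonzero weights, whereas the reduction you carry out --- passing to the isotropic vectors $u_e = L_G^{+/2}v_e$ so that $\sum_e u_e u_e^T = \Pi$, and then running the barrier argument inside the $(n-1)$-dimensional range of $L_G$ --- is exactly what recovers the stated $\ceil{d(n-1)}$; this is a detail the survey glosses over but that the original proof in \cite{BSS} handles the same way. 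Your shift parameters $\delta_L = 1 - 1/\sqrt d$, $\delta_U = 1 + 1/\sqrt d$ differ from the choices in \cite{BSS} ($\delta_L = 1$, $\delta_U = (\sqrt d + 1)/(\sqrt d - 1)$) by a common scaling, but since only the ratio $\delta_U/\delta_L$ and the final endpoint ratio matter after rescaling, this is equivalent; the Sherman--Morrison update, the averaging over edges using $\sum_e u_e u_e^T = \Pi$ to guarantee some admissible $(e,t)$, and the final calibration of $\epsilon_U,\epsilon_L$ that you flag but do not carry out are all exactly the structure of the original proof.
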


However, their proof had very little to do with graphs.
In fact, they derived their result from the following theorem about sparse weighted
  approximations of sums of rank one matrices.
\begin{theorem}[\cite{BSS}]\label{thm:BSSmatrices}
Let $v_{1}, v_{2}, \dots , v_{m}$ be vectors in $\Reals{n}$ with
\[
  \sum_{i} v_{i} v_{i}^{T} = V.
\]
For every $\epsilon \in (0,1)$,
  there exist non-negative real numbers
  $s_{i}$
  with
\[
\sizeof{\setof{i : s_{i} \not = 0}}
\leq 
\ceil{n / \epsilon^{2}}
\]
so that
\begin{equation}\label{eqn:BSSvecs}
(1-\epsilon)^{2} V 
  \pleq \sum_{i} s_{i} v_{i} v_{i}^{T}
  \pleq (1+\epsilon)^{2} V.
\end{equation}
\end{theorem}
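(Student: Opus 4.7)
The plan is to reduce to the isotropic case and then build up the sum $A = \sum_i s_i v_i v_i^T$ one vector at a time using a barrier function argument that controls the extreme eigenvalues. By replacing each $v_i$ with $V^{+/2} v_i$ (projecting onto the image of $V$), we may assume without loss of generality that $V = I$ acts on $\Reals{n}$, so the goal becomes finding non-negative $s_i$ supported on at most $k = \ceil{n/\epsilon^2}$ coordinates with $(1-\epsilon)^2 I \pleq \sum_i s_i v_i v_i^T \pleq (1+\epsilon)^2 I$.

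The heart of the argument is to maintain two potential functions on a growing matrix $A$, namely
\[
\Phi^u(A) = \trace{(uI - A)^{-1}} \AND \Phi^\ell(A) = \trace{(A - \ell I)^{-1}},
\]
where $u$ is an ``upper barrier'' strictly above $\lambda_{\max}(A)$ and $\ell$ is a ``lower barrier'' strictly below $\lambda_{\min}(A)$. The size of $\Phi^u$ and $\Phi^\ell$ quantitatively prevents eigenvalues of $A$ from being too close to $u$ or $\ell$ respectively. I would initialize $A_0 = 0$, $u_0 = n/\epsilon$, $\ell_0 = -n/\epsilon$, so both potentials start at $\epsilon$, and then perform $k$ rank-one updates: at step $j$ we raise the barriers to $u_{j+1} = u_j + \delta_U$ and $\ell_{j+1} = \ell_j + \delta_L$ and add $t_j v_{i_j} v_{i_j}^T$ to $A_j$ for some index $i_j$ and positive weight $t_j$ chosen so that neither $\Phi^u$ nor $\Phi^\ell$ increases.

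The existence of a valid $(i_j, t_j)$ at each step is the crux. Using the Sherman--Morrison formula, the change in each potential under a rank-one update $A \mapsto A + t vv^T$ can be written explicitly in terms of $v^T (uI - A)^{-2} v$, $v^T (uI - A)^{-1} v$, and their analogues for the lower barrier. Summing these expressions against $\sum_i v_i v_i^T = I$ turns the per-vector quantities into traces of operators in $A$, which can be bounded using the current values of $\Phi^u$ and $\Phi^\ell$. An averaging argument then shows there must exist an index $i_j$ for which a suitable $t_j > 0$ admissible to \emph{both} barriers exists, provided the barrier shifts satisfy a simple inequality such as $\frac{1}{\delta_U} + \Phi^u \le \frac{1}{\delta_L} - \Phi^\ell$. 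Choosing $\delta_U = \epsilon$ and $\delta_L = \epsilon/(1-\epsilon)$ (or similar) keeps both potentials at most $\epsilon$ throughout.

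After $k$ steps, the constructed $A_k$ satisfies $\ell_k I \pleq A_k \pleq u_k I$ with $u_k / \abs{\ell_k}$ of order $(1+\epsilon)^2/(1-\epsilon)^2$; a final rescaling of the weights by a positive constant produces the desired two-sided bound \eqref{eqn:BSSvecs}. The main obstacle is the delicate parameter-balancing: one must simultaneously ensure (i) that the averaging bound actually supplies a good vector at every step, (ii) that the barriers can be shifted by amounts adding up to the target gap, and (iii) that the potentials are controlled well enough to complete $k = \ceil{n/\epsilon^2}$ iterations. All other components, including the Sherman--Morrison computations and the initial isotropic reduction, are routine once the right invariants are in place.
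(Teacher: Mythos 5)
The survey never proves Theorem~\ref{thm:BSSmatrices}; it is quoted from \cite{BSS}, and only the barrier idea is imported later, in univariate polynomial form, in Section~\ref{sec:unibarrier}. Your sketch is essentially the original Batson--Spielman--Srivastava argument for the cited result: isotropic reduction, the two potentials $\trace{(uI-A)^{-1}}$ and $\trace{(A-\ell I)^{-1}}$, rank-one updates analyzed via Sherman--Morrison, and an averaging step producing one admissible index and weight per iteration. So in approach it is the right (indeed the intended) proof, and the overall scheme is known to close.

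Two caveats on the part you labeled ``delicate.'' First, your sample parameters are backwards: your own admissibility condition $1/\delta_U+\Phi^u\le 1/\delta_L-\Phi^\ell$ forces the \emph{upper} barrier to advance faster, $\delta_U>\delta_L$. With $\delta_U=\epsilon$, $\delta_L=\epsilon/(1-\epsilon)$ and both potentials equal to $\epsilon$, the condition reads $1/\epsilon+\epsilon\le(1-\epsilon)/\epsilon-\epsilon$, which is false, and even formally the final gap would be $u_k/\ell_k=\frac{2n/\epsilon}{n/(1-\epsilon)}=2(1-\epsilon)/\epsilon$, of order $1/\epsilon$ rather than $(1+\epsilon)^2/(1-\epsilon)^2$. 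A working choice (equivalent to that of \cite{BSS}) is $\delta_L=1$, $\delta_U=\frac{1+\epsilon}{1-\epsilon}$, with potential caps $\epsilon_L=\epsilon$ and $\epsilon_U=\frac{\epsilon(1-\epsilon)}{1+\epsilon}$ and initial barriers $\ell_0=-n/\epsilon_L$, $u_0=n/\epsilon_U$; then $1/\delta_U+\epsilon_U=1-\epsilon=1/\delta_L-\epsilon_L$, and after $\ceil{n/\epsilon^2}$ steps $u_k/\ell_k\le(1+\epsilon)^2/(1-\epsilon)^2$, so a single rescaling gives \eqref{eqn:BSSvecs}. Second, the ``averaging argument'' is where the real work lives: one must prove that the decrease in $\Phi^u$ caused by shifting $u$ by $\delta_U$ pays for the worst admissible update, i.e.\ that the sum over $i$ of the per-vector upper admissibility quantities is at most $1/\delta_U+\Phi^u(A)$ while the sum of the lower ones is at least $1/\delta_L-\Phi^\ell(A)$; these require the Sherman--Morrison expressions together with a Cauchy--Schwarz-type estimate (the matrix analogue of the computation this survey omits in the proof of Lemma~\ref{lem:lowershift}). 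With those two ingredients supplied, your outline is a complete proof.
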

Taking $V$ to be a Laplacian matrix written as a sum of outer products and
  setting $\epsilon=1/\sqrt{d}$ immediately yields Theorem \ref{thm:BSSgraphs}.

Theorem \ref{thm:BSSmatrices} is very general and turned out to be useful in a
  variety of areas including graph theory, numerical linear algebra, and metric
  geometry (see, for instance, the survey of Naor \cite{Naor}).
One of its limitations is that it provides no guarantees on the weights $s_i$
  that it produces, which can vary wildly.  
So it is natural to ask: is there a version of Theorem \ref{thm:BSSmatrices} in
  which all the weights are the same?

This may seem like a minor technical point, but it is actually a fundamental
  difference.
In particular, Gil Kalai observed that the statement of Theorem~\ref{thm:BSSmatrices} with
  $V=I$ is similar to Weaver's Conjecture, which was known to imply a positive 
  solution to the Kadison--Singer Problem.
It turns out that the natural unweighted variant of it is essentially {\em the
  same} as Weaver's conjecture.
We discuss the Kadison--Singer problem and this connection in the next
   section.
\subsection{The Kadison-Singer Problem and Weaver's Conjecture}\label{ssec:ksWeaver}
In 1959, Kadison and Singer \cite{orig_ks} asked the following fundamental question: does
  every pure state on the abelian von Neumann algebra $D(\ell_2)$ of diagonal operators
  on $\ell_2$ have a unique extension to a pure state on $B(\ell_2)$, the von
  Neumann algebra of all bounded operators on $\ell_2$?
In their original paper, they suggested an approach to resolving this question:
  they showed that the answer is yes if every operator in $B(\ell_2)$ can be
  `paved' by a constant number of operators which are strictly smaller in the operator norm.
Beginning with the work of Anderson~\cite{anderson1979a,anderson1979b,anderson1981}, this was shown to be equivalent to several
  combinatorial questions about decomposing {\em finite} matrices into a small
  number of strictly smaller pieces.

Among these questions is Akemann and Anderson's ``projection paving
  conjecture''~\cite{akemann1991lyapunov}, which Nik Weaver 
  \cite{weaver}
  later showed was
  equivalent to the following  discrepancy-theoretic conjecture that he called
  $KS_{2}$.
\begin{conjecture}\label{conj:weaver}
There exist positive constants $\alpha$ and $\epsilon$ so that for every 
  $n$ and $d$ and every 
  set
  of vectors $v_{1}, \dots , v_{n} \in \mathbb{C}^{d}$ such
  that $\norm{v_{i}} \leq \alpha$ for all $i$ and
\[
  \sum_{i} v_{i} v_{i}^{*} = I,
\]
there exists a partition of $\setof{1, \dots , n}$ into two sets $S_{1}$ and $S_{2}$
  so that for each $j \in \setof{1,2}$
\begin{equation}\label{eqn:weaver}
  \Big\| \sum_{i \in S_{j}} v_{i} v_{i}^{*} \Big\| < 1-\epsilon.
\end{equation}
\end{conjecture}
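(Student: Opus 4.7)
My plan is to establish Weaver's conjecture by reducing it to a probabilistic statement about the largest eigenvalue of a sum of independent rank-one random matrices, and then controlling that eigenvalue by the method of interlacing families. First I would lift the problem from $\C^d$ to $\C^{2d}$: for each $i$, define an independent random vector $u_i \in \C^{2d}$ taking the values $\sqrt{2}(v_i,0)^T$ and $\sqrt{2}(0,v_i)^T$ each with probability $1/2$. A realization of $(u_1,\dots,u_n)$ encodes a partition $S_1 \sqcup S_2 = [n]$, and
\[
  \sum_i u_i u_i^* \;=\; 2\sum_{i \in S_1} v_i v_i^* \;\oplus\; 2\sum_{i \in S_2} v_i v_i^*,
\]
whose operator norm is exactly $2 \max_j \big\|\sum_{i \in S_j} v_i v_i^*\big\|$. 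Moreover, $\sum_i \E\,u_i u_i^* = I_{2d}$ and $\|u_i\|^2 \le 2\alpha^2$. It therefore suffices to exhibit a sign pattern for which
\[
  \lambda_{\max}\!\left(\sum_i u_i u_i^*\right) \;\le\; 2(1-\epsilon),
\]
for some $\epsilon > 0$ depending only on $\alpha$.

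Next I would build an interlacing family over the binary tree of partial sign assignments $\sigma \in \{1,2\}^k$ for $0 \le k \le n$, attaching to each node the polynomial
\[
  q_\sigma(x) \;=\; \E_{u_{k+1},\dots,u_n}\!\left[ \det\!\left( xI - \sum_{i=1}^n u_i u_i^* \right) \right],
\]
with $u_1,\dots,u_k$ fixed by $\sigma$. I would then verify that for every internal node the two children $q_{\sigma,1}$ and $q_{\sigma,2}$ are real-rooted and share a common interlacer; this reduces, via an averaging argument, to the real-rootedness of convex combinations of characteristic polynomials of matrices differing by a single rank-one update, which follows from Cauchy interlacing. The interlacing families lemma then produces a leaf $\sigma^\ast$ whose polynomial has largest root no greater than the largest root of the root polynomial $q_\emptyset$.

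The final and hardest step is to bound $\lambda_{\max}(q_\emptyset)$. Because the $u_i u_i^*$ are independent, $q_\emptyset$ is equal (up to appropriate normalization) to the \emph{mixed characteristic polynomial} of the rank-one expectations $A_i := \E\,u_i u_i^* = v_i v_i^* \oplus v_i v_i^*$, which satisfy $\sum_i A_i = I_{2d}$ and $\tr(A_i) = 2\|v_i\|^2 \le 2\alpha^2$. I would then invoke the central estimate from \cite{IF2}: for positive semidefinite $A_i$ with $\sum_i A_i = I$ and $\tr(A_i) \le \epsilon'$, the largest root of their mixed characteristic polynomial is at most $(1+\sqrt{\epsilon'})^2$. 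Applied with $\epsilon' = 2\alpha^2$ this gives $\lambda_{\max}(q_\emptyset) \le (1+\alpha\sqrt{2})^2$, which is strictly less than $2$ whenever $\alpha < 1 - 1/\sqrt{2}$, yielding a usable $\epsilon > 0$.

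I expect the mixed characteristic polynomial bound to be the main obstacle. The reduction, the identification of $q_\emptyset$ as a mixed characteristic polynomial, and the verification that the tree forms an interlacing family are all largely bookkeeping around univariate real-rootedness. In contrast, the $(1+\sqrt{\epsilon'})^2$ estimate has no analogous univariate precedent: it will require replacing the barrier-function method of Batson--Spielman--Srivastava with a genuinely \emph{multivariate} barrier argument that tracks how the extremal real roots of a real stable polynomial in several variables shift under the polarization-type differential operators that build the mixed characteristic polynomial from the ordinary determinant.
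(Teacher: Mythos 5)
Your overall architecture is the paper's: the lift to $\C^{2d}$ via block vectors is exactly the construction in \eqref{eqn:weaverpartition} (your $\sqrt{2}$ rescaling, so that the covariances sum to $I_{2d}$ rather than $\tfrac{1}{2}I_{2d}$, is cosmetic), the endgame is Theorem~\ref{thm:mixedbound} with $\epsilon'=2\alpha^2$, and your arithmetic $\alpha<1-1/\sqrt{2}$ is consistent with Theorem~\ref{thm:mss2main}. You also correctly flag the multivariate barrier argument as the main quantitative ingredient.

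The genuine gap is in your interlacing-family step. You claim that the common interlacing of the two children $q_{\sigma,1},q_{\sigma,2}$ at an internal node ``reduces, via an averaging argument, to the real-rootedness of convex combinations of characteristic polynomials of matrices differing by a single rank-one update, which follows from Cauchy interlacing,'' and later describe this part as bookkeeping around univariate real-rootedness. That justification only works at the bottom level of the tree, where no randomness remains and the two children are $\charp{B+w_1w_1^*}{x}$ and $\charp{B+w_2w_2^*}{x}$ with common interlacer $\charp{B}{x}$ (Lemma~\ref{lem:cauchy}). At every higher node the children are expectations over the \emph{remaining} non-identically distributed vectors, whose covariances $v_iv_i^*\oplus v_iv_i^*$ are not multiples of the identity; hence, unlike in Section~\ref{sec:restricted}, these conditional expectations are not obtained from a single characteristic polynomial by univariate $(1-cD)$ operators, and Lemma~\ref{lem:iminusdproperties} does not apply. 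Nor can you average your way up the tree: the leaves below a node do not share a single common interlacer, and averages of real-rooted polynomials are not real-rooted in general. What is actually needed is that every convex combination $\mu q_{\sigma,1}+(1-\mu)q_{\sigma,2}$ is itself a mixed characteristic polynomial of positive semidefinite matrices and is therefore real-rooted (Theorem~\ref{thm:mixed} together with Theorem~\ref{thm:mixedreal}, which rests on real stability of $\det\left(xI+\sum_i z_iA_i\right)$, the Lieb--Sokal closure under $1-\partial_{z_i}$, and restriction to real values); common interlacing then follows from Theorem~\ref{thm:fell}. So the multivariate stable-polynomial machinery is not only needed for the $(1+\sqrt{\epsilon'})^2$ root bound, as you suggest, but already for the existence of the interlacing family itself; as written, your Cauchy-interlacing justification of that step would fail.
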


To see the similarity between this conjecture and Theorem \ref{thm:BSSmatrices}, observe that for any
partition $S_1\cup S_2$:
\[
   \sum_{i \in S_{1}} v_{i} v_{i}^{*} + \sum_{i \in S_{2}} v_{i} v_{i}^{*}  = I,
\]
so that condition \eqref{eqn:weaver}
  is equivalent to
\[
  \epsilon I \pleq  {\sum_{i \in S_{1}} v_{i} v_{i}^{*}} \pleq (1-\epsilon) I.
\]
Thus, choosing a subset of the weights $s_i$ to be non-zero in Theorem
  \ref{thm:BSSmatrices} is similar to choosing the set $S_{1}$.
The difference is that Conjecture~\ref{conj:weaver} assumes a bound on
the lengths of the vectors $v_i$ and in return requires the stronger conclusion
  that all of the $s_i$ are either $0$ or $1$.
It is easy to see that long vectors are an obstacle to the existence of a good
  partition; an extreme example is provided by considering an orthonormal basis
  $e_1,\ldots,e_n$.
Weaver's conjecture asserts that this is the only obstacle.

Overcoming this seemingly small difference turns out to require substantial new machinery 
  beyond the techniques used in the proof of Theorem~\ref{thm:BSSmatrices}.
However, much of this machinery is built on two key ideas which are contained in \cite{BSS}.
The first is the use of ``barrier functions'' to bound the roots of polynomials,
  which is discussed in Section \ref{sec:unibarrier}.
The second, which was presented purely for motivational purposes in \cite{BSS}, is 
  the examination of expected characteristic polynomials.

As in the case of Ramanujan graphs, Weaver's conjecture can be written in terms of sums of
  independent random rank one matrices.
Given vectors $v_1,\ldots,v_m\in\Complex{d}$, define for each $i$ the random vector
  $r_i\in\Complex{2d}$ 
\begin{equation}\label{eqn:weaverpartition}
  r_{i} 
= \begin{pmatrix}
  v_{i} \\
  0_{d}
\end{pmatrix}
\quad
\textrm{ with probability $1/2$}
\
\quad 
\text{and}
\quad 
\begin{pmatrix}
  0_{d}\\
  v_{i} 
\end{pmatrix}
\quad
\textrm{ with probability $1/2$},
\end{equation}
where $0_d\in\Complex{d}$ is the zero vector.
Then it is easy to see that every realization of $r_1,\ldots,r_m$ corresponds to
  a partition $S_1\cup S_2=[m]$ in the natural way, and that
\[
\sum_{i} r_{i} r_{i}^{*}
=
\begin{pmatrix}
\sum_{i \in S_{1}} v_{i} v_{i}^{*} & 0 \\
0 & \sum_{i \in S_{2}} v_{i} v_{i}^{*}
\end{pmatrix}.
\]
Moreover, the norm of this matrix is the maximum of the norms of the matrices
  in the upper-left and lower-right blocks.
Thus, Weaver's conjecture is equivalent to the statement that when
 the  $\|v_i\|\le\alpha$, the following holds with positive probability:
\begin{equation}\label{eqn:weaverr} \lambda_{max}\left(\sum_{i=1}^m
r_ir_i^*\right)\le 1-\epsilon\end{equation}

Once again, it is possible to apply tools of random matrix theory to analyze
  this sum.
This gives a proof of the conjecture with $\alpha=1/\log n$, essentially
  recovering a result of Bourgain and Tzafriri \cite{btks}, which was
  essentially the best partial solution to Kadison--Singer until recently.

The main result of \cite{IF2} is the following strong form of Weaver's
conjecture.
\begin{theorem}\label{thm:mss2main}
Let $v_{1}, \dots , v_{m}\in\Complex{d}$ satisfy
  $\sum_{i} v_{i} v_{i}^{*} = I$ and
  $\norm{v_{i}}^{2} \leq \alpha $ for all $i$.
Then, there exists a partition of $\setof{1,\dots ,m}$ into sets $S_{1}$
  and $S_{2}$ so that for $j \in \setof{1,2}$,
\begin{equation}
\norm{ \sum_{i \in S_{j}} v_{i} v_{i}^{*}}
\leq 
\frac{(1+\sqrt{2 \alpha})^{2}}{2}.
\end{equation}
\end{theorem}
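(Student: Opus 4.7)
The plan is to reduce Theorem~\ref{thm:mss2main} to a statement about a random sum of rank-one matrices and then to analyze it via the method of interlacing families of polynomials applied to mixed characteristic polynomials. Using the embedding in \eqref{eqn:weaverpartition}, the conclusion is equivalent to showing that with positive probability
$$ \lambda_{max}\left(\sum_{i=1}^m r_i r_i^*\right) \leq \frac{(1+\sqrt{2\alpha})^2}{2}. $$
Since one cannot in general deduce such an inequality from a bound on the expected top eigenvalue, I would instead work with characteristic polynomials and aim to produce an explicit realization of the $r_i$'s whose top root obeys this bound.

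Concretely, index the $2^m$ realizations by sign patterns $\sigma \in \setof{1,2}^m$ and form the family of real-rooted polynomials
$$ q_\sigma(x) := \det\left(xI - \sum_{i=1}^m r_i(\sigma_i)\, r_i(\sigma_i)^*\right). $$
I would show that $\setof{q_\sigma}_\sigma$ forms an \emph{interlacing family} in the sense developed in \cite{IF1,IF2}: the conditional expectations obtained by averaging over any suffix of the coordinates of $\sigma$ are all real-rooted, and polynomials sharing a prefix admit a common interlacer. From the general framework this guarantees the existence of some $\sigma^*$ with $\lambda_{max}(q_{\sigma^*}) \leq \lambda_{max}(\E_\sigma q_\sigma)$. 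The real-rootedness required at every node of the conditioning tree should follow by exhibiting each conditional expectation as a univariate specialization of a real stable polynomial obtained from $\det(xI + \sum_i z_i v_i v_i^*)$ by applying differential operators that preserve real stability.

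It then remains to bound the top root of the expected polynomial $\E_\sigma q_\sigma(x)$ by $(1+\sqrt{2\alpha})^2/2$, which is the main obstacle. I would identify $\E_\sigma q_\sigma$ as a \emph{mixed characteristic polynomial} of suitably scaled copies of $v_1 v_1^*, \dots, v_m v_m^*$, obtained by applying a product of operators of the form $(1 - \partial_{z_i})$ to a multiaffine determinantal polynomial in the $z_i$'s and then setting each $z_i = 0$. To bound its largest root, I would introduce a multivariate analog of the barrier functions of \cite{BSS} and track how each operator $(1 - \partial_{z_i})$ shifts the rightmost root upward, using the constraint $\norm{v_i}^2 \leq \alpha$ to control each individual shift. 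The delicate step is a precise optimization of the barrier parameters so that the accumulated shift after all $m$ operators yields the sharp threshold $(1+\sqrt{2\alpha})^2/2$; obtaining this exact constant, rather than a loose bound polynomial in $\alpha$, is where the argument requires the most care and most likely demands genuinely new ideas beyond the univariate barrier method.
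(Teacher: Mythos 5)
Your proposal follows the paper's route exactly: embed the partitioning problem as a sum of independent random rank-one matrices via \eqref{eqn:weaverpartition}, use the interlacing-family machinery (Theorem~\ref{thm:comparison}) to reduce to bounding the largest root of the expected characteristic polynomial, recognize that polynomial as a mixed characteristic polynomial (Theorem~\ref{thm:mixed}, made real-rooted by Theorem~\ref{thm:mixedreal}), and bound its largest root with the multivariate barrier argument (Theorem~\ref{thm:mixedbound}). The only bookkeeping you leave implicit is the normalization that produces the stated constant: rescaling each $r_i$ by $\sqrt{2}$ makes the covariance matrices $A_i = \E r_i r_i^*$ sum to $I_{2d}$ with $\tr(A_i) = 2\norm{v_i}^2 \le 2\alpha$, so the generic bound $(1+\sqrt{\epsilon})^2$ of Theorem~\ref{thm:mixedbound} applies with $\epsilon = 2\alpha$, and undoing the rescaling gives $(1+\sqrt{2\alpha})^2/2$.
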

We will sketch the proof of Theorem \ref{thm:mss2main}, which is closely related
  to the proof of Theorem \ref{thm:IF1},  in Sections \ref{sec:mixed} and \ref{sec:multibarrier}.

\subsection{Sums of Independent Rank One Random Matrices}
As witnessed by equations \eqref{eqn:signedlaplacian} and \eqref{eqn:weaverr}, the common thread in the 
  problems described above is that they can all be resolved by showing that a
  certain sum of independent random rank one matrices has small eigenvalues with
  nonzero probability.
Prior to this line of work, there were already well-developed tools in random
  matrix theory for reasoning about such sums,
  generally called Matrix Chernoff Bounds~\cite{AhlswedeWinter,rudelson2007sampling,tropp2012}.
As mentioned earlier, these provide bounds that are worse than those we require by a factor that is logarithmic
  in the dimension.
However, they  hold with high probability rather than the merely positive probability that we obtain.

Our approach to analyzing the
  eigenvalues of sums of independent rank one random matrices
  rests on the following connection between possible values of
  any particular eigenvalue, and the corresponding root of its
  expected characteristic polynomial.
We will use $\lambda_1\ge\lambda_2,\ldots,\ge\lambda_n\in\R$ to denote the
  eigenvalues of a Hermitian matrix as well as the roots of a real-rooted
  polynomial.
\begin{theorem}[Comparison with Expected Polynomial] \label{thm:comparison} Suppose
$r_1,\ldots,r_m\in\C^n$ are independent random vectors. Then, for every $k$,
$$ \lambda_k\left(\sum_{i=1}^m
r_ir_i^*\right)\le\lambda_k\left(\E\charp{\sum_{i=1}^m r_i
r_i^*}{x}\right),$$
with positive probability, and the same is true with $\ge$ instead of $\le$.\end{theorem}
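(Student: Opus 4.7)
The plan is to apply the method of interlacing families of polynomials. Assume for simplicity that each $r_i$ takes values in a finite set $\Omega_i$; the general case follows by approximation. For every partial assignment $(\omega_1,\ldots,\omega_j)\in\Omega_1\times\cdots\times\Omega_j$, define
\[
p_{\omega_1,\ldots,\omega_j}(x) \;=\; \E_{r_{j+1},\ldots,r_m}\charp{\sum_{i\le j} r_i^{(\omega_i)}(r_i^{(\omega_i)})^{*}+\sum_{i>j} r_ir_i^{*}}{x},
\]
so that $p_{\emptyset}(x)=\E\charp{\sum_i r_ir_i^{*}}{x}$ sits at the root of a tree whose leaves $p_{\omega_1,\ldots,\omega_m}$ are the characteristic polynomials of the realized matrices. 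First I would invoke the main lemma of interlacing families: if at every internal node the polynomials labeling its children have a common interlacer, then there exists a leaf $\omega^{*}$ with $\lambda_k(p_{\omega^{*}})\le \lambda_k(p_{\emptyset})$, and symmetrically another leaf where the opposite inequality holds. Since $\lambda_k(p_{\omega^{*}})$ is an eigenvalue of the realized matrix $\sum_i r_i^{(\omega_i^{*})}(r_i^{(\omega_i^{*})})^{*}$, this would immediately yield the theorem for both $\le$ and $\ge$.

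The task therefore reduces to verifying the common interlacer hypothesis at every internal node. For this I would use the standard equivalence that real-rooted polynomials $q_1,\ldots,q_K$ admit a common interlacer if and only if every convex combination $\sum_k \mu_k q_k$ is real-rooted. The children of a node $(\omega_1,\ldots,\omega_j)$ correspond to freezing the outcome of $r_{j+1}$ to each value in $\Omega_{j+1}$ and averaging over the remaining coordinates; any convex combination of them, weighted by some distribution $\mu$ on $\Omega_{j+1}$, is therefore itself
\[
\E\charp{\sum_{i\le j} r_i^{(\omega_i)}(r_i^{(\omega_i)})^{*} + \tilde r\,\tilde r^{*} + \sum_{i>j+1} r_ir_i^{*}}{x},
\]
where $\tilde r$ is an independent random vector with law $\mu$. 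In particular, this convex combination is again the expected characteristic polynomial of a sum of independent rank-one random matrices, so both conditions (real-rootedness at the node and existence of a common interlacer among its children) collapse into a single assertion.

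The main obstacle, and the genuine content of the argument, is precisely that assertion: for any independent random vectors $r_1,\ldots,r_m\in\C^n$, the polynomial $\E\charp{\sum_i r_ir_i^{*}}{x}$ is real-rooted. I would supply this by invoking the theory of mixed characteristic polynomials developed later in the paper, which established exactly this real-rootedness. Granted this fact, every convex combination at every internal node is real-rooted, hence the children have a common interlacer at every node, and the walk down the tree produces the desired leaf $\omega^{*}$ for both inequalities simultaneously, completing the proof.
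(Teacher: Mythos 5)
Your proposal is correct and follows essentially the same route as the paper's proof: you build the tree of conditional expectation polynomials, observe that any $\mu$-weighted convex combination of siblings is itself the expected characteristic polynomial of a sum of independent rank-one random matrices (equivalently a mixed characteristic polynomial with $\E\tilde r\tilde r^*=\sum_i\mu_i w_iw_i^*$ as one covariance matrix), invoke real-rootedness of mixed characteristic polynomials to get the common interlacer via Theorem~\ref{thm:fell}, and then descend the tree using Theorem~\ref{thm:interlace}. This is exactly the paper's inductive argument, just phrased in the ``interlacing family'' language rather than spelling out each inductive step.
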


In the special case when the $r_i$ are identically distributed with $\E
  r_i r_i^*=I$, there is short proof of Theorem \ref{thm:comparison} 
  that only requires univariate interlacing.
We present this proof as Lemma \ref{lem:induct} and  and use it to establish a
  variant of Bourgain and Tzafriri's restricted invertibility theorem.
In Section \ref{sec:mixed} we prove the theorem in full generality using
  tools from the theory of real stable polynomials. 
This yields mixed characteristic polynomials, which are then analyzed in
  Sections \ref{sec:matchingpoly} and \ref{sec:multibarrier} to prove 
  the existence of infinite families of Bipartite Ramanujan Graphs as well as
  Weaver's Conjecture

\section{Interlacing Polynomials}
A defining characteristic of the proofs in \cite{IF1} and \cite{IF2} 
  is that they analyze matrices solely through their characteristic polynomials.
This is perhaps a counterintuitive way to proceed; on the surface, we are losing information 
  by considering characteristic polynomials, which only know about eigenvalues and not
  eigenvectors.  
However, the structure we gain far outweighs the losses in two ways: the
  characteristic polynomials satisfy a number of algebraic
  identities which make calculating their averages tractable, and they are
  amenable to a set of analytic tools that do not naturally apply
  to matrices.

As hinted at earlier, we study the roots of 
   {\em averages} of polynomials. 
In general, averaging polynomials coefficient-wise can do unpredictable things to the roots.
For instance, the average of $(x-1)(x-2)$ and $(x-3)(x-4)$, which are both
  real-rooted quadratics, is $x^2-5x+7$, which has complex roots
  $2.5\pm\sqrt{3}i$.
Even when the roots of the average are real, there is in general no simple
  relationship between the roots of two polynomials and the roots of their
  average.

The main insight is that there are nonetheless many situations where averaging the
  coefficients of polynomials also has the effect of averaging each
  of the roots individually, and that it is possible to identify and
  exploit these situations.
The key to doing this systematically is the classical notion of {\em interlacing}.
\begin{definition}[Interlacing]  Let $f$ be a degree $n$ polynomial with real
roots $\{\alpha_i\}$, and let $g$ be degree $n$
or $n-1$ with real roots $\{\beta_i\}$ (ignoring $\beta_n$ in the degree $n-1$ case). We say
that $g$ interlaces $f$ if their roots alternate, i.e.,
$$ \beta_n\le\alpha_n\le \beta_{n-1}\le \ldots \beta_1\le \alpha_1,$$
and the largest root belongs to $f$.

If there is a single $g$ which interlaces a family
  of polynomials $f_1,\ldots,f_m$, we say that they have a {\em common
  interlacing}.
\end{definition}
It is an easy exercise to show that $f_1,\ldots,f_m$ of degree $n$ have a common interlacing iff there
  are closed intervals $I_n\le I_{n-1}\le\ldots I_1$ (where $\le$ means to the
  left of) such that the $i$th roots
  of all the $f_j$ are contained in $I_i$.
It is also easy to see that a set of polynomials has a common interlacing iff every
  pair of them has a common interlacing (this may be viewed as Helly's theorem
  on the real line).

We now state our main theorem about averages of polynomials with common
  interlacings.
\begin{theorem}[Lemma 4.1 in \cite{IF1}]\label{thm:interlace} Suppose $f_1,\ldots,f_m$ are
real-rooted of degree $n$ with positive leading
coefficients. Let $\lambda_k(f_j)$ denote the $k^{th}$ largest root of $f_j$ and
let $\mu$ be any distribution on $[m]$. If
$f_1,\ldots,f_m$ have a common interlacing, then for all $k=1,\ldots,n$
$$\min_j \lambda_k(f_j)\le \lambda_k(\E_{j\sim \mu}
f_j)\le \max_j \lambda_k(f_j).$$
\end{theorem}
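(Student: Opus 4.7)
The plan is to exploit the interval-based characterization of common interlacing recalled just above the theorem: there exist closed intervals $I_n\le I_{n-1}\le\ldots\le I_1$ (with $\le$ meaning ``to the left of'') such that for every $j$, $\lambda_k(f_j)\in I_k$. Writing $h:=\E_{j\sim\mu}f_j$, the goal is to sandwich $\lambda_k(h)$ inside the sub-interval $[\min_j\lambda_k(f_j),\,\max_j\lambda_k(f_j)]\subseteq I_k$ by a sign analysis at strategically chosen points.

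First I would show that $h$ is real-rooted with exactly one root in each $I_k$. Let $g$ be a common interlacer of $f_1,\ldots,f_m$ with roots $\gamma_1\ge\gamma_2\ge\ldots$ separating the $I_k$'s. By the interlacing hypothesis, at each $\gamma_i$ every $f_j$ has exactly $i$ roots weakly above $\gamma_i$, so $(-1)^i f_j(\gamma_i)\ge 0$; averaging over $j$ gives $(-1)^i h(\gamma_i)\ge 0$. Combined with the signs of $h$ at $\pm\infty$ coming from the positive leading coefficient (which is itself a convex combination of positive numbers), the intermediate value theorem produces one real root of $h$ in each of the $n$ intervals cut out by the $\gamma_i$'s. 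Since $\deg h=n$, these account for all roots of $h$; in particular, $h$ is real-rooted and $\lambda_k(h)$ is the unique root of $h$ lying in $I_k$.

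Next I would repeat the sign analysis at the two points $x^\star:=\max_j\lambda_k(f_j)$ and $y^\star:=\min_j\lambda_k(f_j)$, both of which sit inside $I_k$. At $x^\star$, every $f_j$ has $\lambda_k(f_j)\le x^\star$, and for $k\ge 2$ the interlacing further forces $\lambda_{k-1}(f_j)\ge x^\star$ because $\lambda_{k-1}(f_j)\in I_{k-1}$ lies weakly to the right of $I_k$. Consequently $f_j$ has exactly $k-1$ roots strictly above $x^\star$ (or $f_j(x^\star)=0$ if the $k$-th root coincides with $x^\star$), so $(-1)^{k-1}f_j(x^\star)\ge 0$ for every $j$, and hence $(-1)^{k-1}h(x^\star)\ge 0$. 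The symmetric calculation at $y^\star$ gives $(-1)^{k-1}h(y^\star)\le 0$. The intermediate value theorem then supplies a root of $h$ in $[y^\star,x^\star]\subseteq I_k$, and by the first step this root is the unique one $h$ has in $I_k$, so it equals $\lambda_k(h)$.

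The main obstacle I anticipate is handling non-generic configurations cleanly: repeated roots inside a single $f_j$, several $f_j$'s sharing the same $k$-th root (so $y^\star=x^\star$ and the sandwich degenerates to an equality), or a $\lambda_k(f_j)$ coinciding with an endpoint of $I_k$, i.e., with a root of $g$. These collapse some of the strict sign alternations into equalities and can produce a double root of $h$ sitting exactly on the boundary between two $I_k$'s, so the ``one root per $I_k$'' count must be read with multiplicities. I expect these cases to be absorbed by stating the sign inequalities non-strictly throughout and invoking the rigidity of the interlacing to identify the correct root of $h$; alternatively one could perturb $g$ and the $f_j$'s to reduce to the generic case and then pass to the limit, since the conclusion itself permits equality.
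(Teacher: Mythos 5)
Your proposal is correct and follows essentially the same route the paper intends: the interval characterization of a common interlacing plus sign analysis and the intermediate value theorem inside each interval $I_k$ (the paper dismisses this as a ``three line exercise''). The only loose end is the degenerate-case handling you flag yourself; if you take the perturbation route, perturb the roots in a coordinated way (independent generic perturbations of the $f_j$ can destroy the common interlacing when consecutive intervals share endpoints), or simply note that $\E_{j\sim\mu} f_j$ can vanish at a point where all $f_j$ have equal weak sign only if every supported $f_j$ vanishes there, so that common root can be factored out and the argument finished by induction on the degree.
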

The proof of this theorem is a three line exercise, which essentially amounts to
  applying the intermediate value theorem inside each interval $I_i$.

An important feature of common interlacings is that their existence is
  {\em equivalent} to certain real-rootedness statements.
Often, this characterization gives us a systematic way to argue that common
  interlacings exist.
The following seems to have been discovered a number of times.
It appears as Theorem~$2.1$ of Dedieu~\cite{Dedieu}, (essentially) as
Theorem~$2'$ of Fell~\cite{Fell}, and as (a special case of) Theorem 3.6 of Chudnovsky and
Seymour~\cite{ChudnovskySeymour}.
The proof of it included below assumes that the roots of a 
  polynomial are continuous functions of its coefficients (which may be 
  shown using elementary complex analysis).
\begin{theorem}\label{thm:fell} If $f_1,\ldots,f_m$ are degree $n$ polynomials and 
all of their convex combinations $\sum_{i=1}^m \mu_if_i$ have real roots,
then they have a common interlacing.
\end{theorem}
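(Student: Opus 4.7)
My plan is to first reduce to $m=2$ using the pairwise characterization of common interlacing noted earlier in the excerpt (a consequence of Helly's theorem on the line): if every pair of the $f_i$ has a common interlacing, so does the whole family. Since the hypothesis that all convex combinations of $f_1,\ldots,f_m$ are real-rooted immediately restricts to every pair (by setting the remaining weights to zero), it suffices to prove the $m=2$ case.

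So take $f_1,f_2$ of degree $n$ with $f_t:=(1-t)f_1+tf_2$ real-rooted for all $t\in[0,1]$, and argue by contradiction. Suppose $f_1,f_2$ have no common interlacing. By the interval characterization, there is an index $i$ for which (after possibly swapping $f_1\leftrightarrow f_2$)
$$\lambda_{i+1}(f_2) > \lambda_i(f_1).$$
Pick $c$ in the nonempty open interval $(\lambda_i(f_1),\lambda_{i+1}(f_2))$, chosen generically: $c$ should avoid the (finitely many) roots of $f_1f_2$, and $c$ should not be a multiple root of any $f_t$, $t\in\mathbb{R}$. The set of such ``bad'' $c$'s is the projection to the $c$-axis of $\{(t,c): f_t(c)=0,\ \partial_x f_t(c)=0\}$; eliminating $t$ from the first equation and substituting into the second reduces to the single polynomial condition $f_1(c)f_2'(c)-f_2(c)f_1'(c)=0$ of degree at most $2n-1$ in $c$, so this set is finite. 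A good $c$ exists since our interval is infinite.

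Let $N_t(c):=|\{x>c:f_t(x)=0\}|$. The choice of $c$ gives $N_0(c)\le i-1$ (as $c>\lambda_i(f_1)$, only the top $i-1$ roots of $f_1$ can lie above $c$) and $N_1(c)\ge i+1$ (as $c<\lambda_{i+1}(f_2)$, the top $i+1$ roots of $f_2$ all lie above $c$), so $N_1(c)-N_0(c)\ge 2$. On the other hand, $N_t(c)$ is piecewise constant in $t$ and can change only where $f_t(c)=0$. Since $f_t(c)=(1-t)f_1(c)+tf_2(c)$ is affine in $t$, this happens at most once, at some $t^*\in\mathbb{R}$. If $t^*\notin[0,1]$ or $f_1(c)=f_2(c)$, then $N_t(c)$ is constant on $[0,1]$, already a contradiction. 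Otherwise $c$ is a simple root of $f_{t^*}$ by the generic choice, and the implicit function theorem (applied to $f_t(x)=0$ near $(t^*,c)$, using $f_{t^*}'(c)\ne 0$) produces a smooth branch $x(t)$ with $x(t^*)=c$ and nonzero $t$-derivative, which crosses $c$ transversely. The jump in $N_t(c)$ at $t^*$ is therefore $\pm 1$, giving $|N_1(c)-N_0(c)|\le 1$---the desired contradiction.

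The main thing to get right is the ``generic $c$'' step: the algebraic observation that the $(t,c)$'s satisfying both $f_t(c)=0$ and $\partial_x f_t(c)=0$ project to a finite set of $c$-values. Once that is in place, everything else follows from the affine dependence of $f_t(c)$ on $t$ and the continuity of roots of a polynomial in its coefficients flagged in the excerpt.
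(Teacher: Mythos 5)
Your proposal is correct (at the same level of rigor as the paper's own sketch) but follows a genuinely different route. The paper likewise reduces to two polynomials and considers the pencil $f_t=(1-t)f_0+tf_1$, but then argues directly: after dividing out any common roots, it tracks the $n$ continuous root curves of $f_t$ and notes that no curve can pass through a root of $f_0$ or $f_1$ at an interior time (since $f_t(r)=0$ together with $f_0(r)=0$ forces $tf_1(r)=0$), so each curve sweeps out an interval containing exactly one root of each endpoint polynomial, and these intervals \emph{are} the common interlacing. You instead argue by contradiction via the interval characterization: a failure of common interlacing yields an index $i$ with $\lambda_{i+1}(f_2)>\lambda_i(f_1)$, and you count roots above a generic test point $c$ between them, exploiting that $f_t(c)$ is affine in $t$ (so it vanishes for at most one $t^*$) and that transversality at a simple root lets the count jump by at most one, against a required jump of at least two. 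The paper's argument is shorter and produces the interlacing intervals explicitly, at the cost of the divide-out-common-roots reduction; yours avoids that reduction but pays with the genericity step (finiteness of the bad set via the Wronskian $f_1f_2'-f_2f_1'$, which you should note is not identically zero unless $f_1,f_2$ are proportional --- a trivial case, since proportional polynomials share their roots) and with invoking the contrapositive of the interval characterization. Two small points to tighten: $N_t(c)$ should count roots \emph{with multiplicity}, since otherwise the bound $N_1(c)\ge i+1$ and the claim that the count changes only at crossings can fail when roots of $f_t$ collide above $c$; and, exactly like the paper's ``$n$ continuous curves'' step, your claim that $N_t(c)$ changes only where $f_t(c)=0$ silently assumes the degree of $f_t$ stays $n$ on $[0,1]$ (no root escapes to infinity), i.e.\ that the leading coefficients do not cancel along the segment --- a hypothesis implicit in the theorem as stated and in the paper's own proof, so it is a shared caveat rather than a defect of your approach.
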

\begin{proof} Since common interlacing is a pairwise condition, it suffices to
handle the case of two polynomials $f_0$ and $f_1$. Let $$f_t := (1-t)f_0+tf_1$$
with $t\in [0,1]$.  Assume without loss of generality that $f_0$ and $f_1$ have
no common roots (if they do, divide them out and put them back in at the end). As $t$ varies from
$0$ to $1$, the roots of $f_t$ define $n$ continuous curves in the complex plane
$C_1,\ldots,C_n$, each beginning at a root of $f_0$ and ending at a root of $f_1$. 
By our assumption the curves must all lie in the real line. 
Observe that no curve can cross a root of either $f_0$ or $f_1$ in the middle:
if $f_t(r)=0$ for some $t \in (0,1)$ and $f_0(r)=0$, then immediately we also have
$f_t(r)=tf_1(r)=0$, contradicting the no common roots assumption. 
Thus, each curve defines a closed interval containing exactly one root of $f_0$
and one root of $f_1$, and these intervals do not overlap except possibly at
their endpoints, establishing the existence of a common interlacing.
\end{proof}

It is worth mentioning that the converse of Theorem~\ref{thm:fell} is true as
  well, but we will not use this fact.


While interlacing and real-rootedness are entirely univariate notions as
  discussed above, the most powerful ways to apply them arise by viewing them as
  restrictions of multivariate phenomena.
There are two important generalizations of real-rootedness to more than one
  variable: real stability and hyperbolicity.

We were inspired by the development of the theory of real stability
  in the works of Borcea and \Branden, including
  \cite{BBmixedDeterminants,BBWeylAlgebra,BBLeeYang1}.
Their results center primarily around characterizations of stable polynomials,
  including closure properties   (that is, operations that preserve real stability
  of polynomials)  and showing that properties of various mathematical structures
  an be related to the stability of some ``generating polynomial'' of that structure.

There is an isomorphism between real stable polynomials and 
  {\em hyperbolic polynomials}, a concept that originated in a series of papers by 
  G{\aa}rding \cite{gaarding}
 in his investigation of partial differential equations.
The theory of hyperbolic polynomials was developed further in the optimization community
  (see the survey of Renegar \cite{renegar}).
However, it was not until Gurvits's use of hyperbolic polynomials in his proof
  of the van der Waerden conjecture \cite{gurvitsOne}, that their combinatorial power
  was revealed.

While it is well known that the concepts of real stability and hyperbolicity are
essentially equivalent (one can translate easily between the two), various
features of the way each property is defined have led to a natural separation 
of results: algebraic closure properties and characterization in real stability
and analytic properties such as convexity in hyperbolicity. 
The ``method of interlacing polynomials" discussed in this survey, is in many ways a recipe for mixing the ideas 
  from these two communities into a single proof technique.

The method of interlacing polynomials consists of two somewhat distinct parts.  
The first is to show that a given collection of polynomials forms what we call
  an {\em interlacing family}, which is broadly speaking any class of polynomials
  for which the roots of its average can be related to those of the individual
  polynomials.
This falls naturally into the realm of results regarding real stable
  polynomials as it often reduces to that showing various linear
  combinations of polynomials are real-rooted.
The second part is to bound one of the roots of the expected polynomial under some distribution.
This is more of an analytic task, for which the convexity properties studied in
  the context of hyperbolicity are relevant.
For instance, in \cite{IF2}, the analysis of the largest root is based on
  understanding the evolution of the root surfaces defined by a 
  multivariate polynomial as certain differential operators are applied to it, and draws on the same convexity properties 
  that are at the core of hyperbolic polynomials.

\section{Restricted Invertibility}\label{sec:restricted}
The purpose of this section is to give the simplest possible demonstration of
  the method of interlacing families of polynomials.
It will be completely elementary and self-contained, relying only on classical facts about
  univariate polynomials, and should be accessible to an undergraduate.
Nonetheless, it is structurally almost identical to the proof of
  Weaver's conjecture and contains most of the same conceptual components in a
  primitive form.

Bourgain and Tzafriri's restricted invertibility theorem \cite{BT} states that any square
  matrix $B$ with unit length columns and small operator norm contains a large 
  column submatrix $B_S$ which is well-invertible on its span.
That is, the least singular value of the submatrix,
   $\sigma_{|S|}(B_S)$, is large.
This may be seen as a robust, quantitative version of the fact that any matrix
  contains an invertible submatrix of size equal to its rank.
The theorem was generalized to arbitrary rectangular $B$ by Vershynin
  \cite{versh}, and further sharpened in \cite{SpielmanSrivastava,youssef}.
We will give a proof of the following theorem from \cite{SpielmanSrivastava}, which corresponds to
  the important case $BB^T=I$, when the columns of $B$ are isotropic.

\begin{theorem}\label{thm:isotropic} Suppose $v_1,\ldots,v_m\in\C^n$ are vectors
with $\sum_{i=1}^mv_iv_i^T=I_n$. Then for every $k<n$ there is a subset $S\subset [m]$ of size $k$
with
$$ \lambda_k\left(\sum_{i\in S}v_iv_i^T\right)\geq
\left(1-\sqrt{\frac{k}{n}}\right)^2\frac{n}{m}.$$
\end{theorem}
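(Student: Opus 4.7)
The plan is to sample a random multiset of $k$ indices, invoke Theorem~\ref{thm:comparison} to reduce the eigenvalue problem to a root bound on a single explicit univariate polynomial, and then handle that root bound by a univariate barrier-function argument.

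Rescale by setting $w_i := \sqrt{m}\,v_i$, so that drawing an index uniformly from $[m]$ yields a random vector $w$ satisfying $\E ww^* = \frac{1}{m}\sum_i m\,v_iv_i^* = I$. Let $r_1,\dots,r_k$ be i.i.d.\ copies of $w$ and set $M := \sum_{j=1}^k r_j r_j^*$. If the drawn indices are distinct and form a set $S$ of size $k$, then $M = m \sum_{i\in S} v_iv_i^*$; otherwise $\mathrm{rank}(M) < k$ and $\lambda_k(M) = 0$. Hence it suffices to exhibit a single realization with $\lambda_k(M) \geq n(1 - \sqrt{k/n})^2$, because a positive value of $\lambda_k$ already forces the indices to be distinct, and dividing by $m$ then yields the conclusion. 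By Theorem~\ref{thm:comparison}, such a realization exists provided $\lambda_k\bigl(\E\,\charp{M}{x}\bigr) \geq n(1 - \sqrt{k/n})^2$.

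To compute the expected characteristic polynomial, I apply the matrix determinant lemma and integrate out one vector at a time: for any Hermitian $S$ and any $r$ with $\E rr^* = I$,
\[ \E_r \det(xI - S - rr^*) \;=\; \det(xI-S) \;-\; \E_r\bigl[r^*\,\mathrm{adj}(xI-S)\,r\bigr] \;=\; (1-\partial_x)\det(xI-S), \]
using $\E\,r^*Ar = \tr(A)$ together with Jacobi's identity $\tr\,\mathrm{adj}(xI-S) = \partial_x \det(xI-S)$. Iterating over $r_1,\dots,r_k$ starting from $\det(xI) = x^n$ gives
\[ \E\,\charp{M}{x} \;=\; (1-\partial_x)^k\, x^n. \]
This polynomial has the form $x^{n-k}\,q_k(x)$ for a degree-$k$ polynomial $q_k$ with positive real roots (each realization of $M$ is positive semidefinite of rank at most $k$, so all roots of the expected polynomial are non-negative, and direct inspection of the coefficient of $x^{n-k}$ shows it is non-zero). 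Thus $\lambda_k$ of the expected polynomial is exactly the smallest root of $q_k$.

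The only nontrivial step left is the deterministic claim that the smallest root of $q_k$ is at least $(\sqrt{n}-\sqrt{k})^2 = n(1-\sqrt{k/n})^2$. I plan to establish this by tracking the lower barrier $\Phi(p, x) := -p'(x)/p(x)$ at a point $x$ lying strictly below the smallest positive root of $p_j := (1-\partial_x)^j x^n$, exploiting the identity $(1-\partial_x)p(x) = p(x)\bigl(1+\Phi(p,x)\bigr)$ and the convex behavior of $\Phi$ in the relevant interval to derive an invariant of the form: if $\Phi(p_j, x_j) \leq \phi$ then $\Phi(p_{j+1}, x_j - \delta) \leq \phi$ for a controlled decrement $\delta$. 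Starting at $j=0$ from a well-chosen $x_0$, running the invariant for $k$ steps, and optimizing the barrier value $\phi$ then yield the target bound. The comparison step and the polynomial computation are essentially bookkeeping once Theorem~\ref{thm:comparison} is accepted; the real analytic content lies in the barrier argument, which is the univariate prototype of the multivariate barrier machinery developed later for the proofs of Theorems~\ref{thm:IF1} and~\ref{thm:mss2main}.
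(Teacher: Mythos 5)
Your reduction to the expected polynomial, your handling of repeated indices, and the computation $\E\charp{M}{x}=(1-\partial_x)^k x^n$ all match the paper's route (the paper proves the needed comparison directly as Lemma~\ref{lem:induct} rather than citing Theorem~\ref{thm:comparison}, but that is immaterial). The gap is in the step you yourself identify as carrying the analytic content. You propose to run a lower-barrier shift through the sequence $p_j=(1-\partial_x)^j x^n$ at a point $x_j$ lying in the gap between $0$ and the smallest \emph{positive} root of $p_j$. But $p_j$ has a root at $0$ of multiplicity $n-j$, so at such a point the barrier $\Phi_{p_j}(x_j)=\sum_i(\lambda_i-x_j)^{-1}$ contains $n-j$ negative terms equal to $-1/x_j$. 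Everything that drives the paper's shift estimate (Lemma~\ref{lem:lowershift}) --- positivity of the individual terms, which is what the Cauchy--Schwarz step (Claim 3.6 of \cite{BSS}) needs, and convexity/monotonicity of the barrier --- is available only when the evaluation point lies below \emph{all} roots. In your gap these fail or are unproved: for instance $\Phi_{p_j}''(x)$ contains the term $-2(n-j)/x^{3}$, so $\Phi_{p_j}$ is not convex near $0$, and ``convex behavior in the relevant interval'' is asserted, not established. Moreover the nonzero-root part is not updated by $(1-D)$: writing $p_j=x^{n-j}q_j$ one finds $q_{j+1}=(x-(n-j))q_j-xq_j'$, so you would need a new shift lemma for this operator, which you have not supplied. (The bookkeeping is right: if your invariant held with decrement $1/(-1-\phi)$, optimizing $\phi$ would give exactly $(\sqrt n-\sqrt k)^2$; what is missing is the justification of the invariant.)

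The paper sidesteps all of this with one identity your proposal lacks: $(1-D)^k x^n = x^{n-k}(1-D)^n x^k$, verified either term by term or by the Wishart computation comparing $\charp{GG^*}{x}$ with $\charp{G^*G}{x}$. This converts the $k$th largest root of the degree-$n$ polynomial into the \emph{smallest} root of the degree-$k$ polynomial $(1-D)^n x^k$ (a multiple of the associated Laguerre polynomial $\L_k^{(n-k)}$), all of whose roots are positive, so the standard lower barrier at a point below all roots applies cleanly: $n$ applications of Lemma~\ref{lem:lowershift} starting from $\smin_\varphi(x^k)=-k/\varphi$ give $-k/\varphi+n/(1+\varphi)$, optimized at $\varphi=\sqrt k/(\sqrt n-\sqrt k)$. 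To complete your argument, either insert this identity (or identify $q_k$ with $\L_k^{(n-k)}$ and cite known bounds on its extreme roots, e.g.\ \cite{krasikov}), or prove from scratch the convexity and shift estimates that your in-gap invariant requires; as written, the final step does not go through.
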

The proof of this theorem has two
  parts. 
The first part is the special case of Theorem \ref{thm:comparison}
  in which $r_{1}, \dots , r_{n}$ are independent and identically distributed (i.i.d.)
  and $\expec{}{r_{i} r_{i}^{*}} = cI$.
It reduces the problem of showing the existence of a good subset to that of
  analyzing the roots of the expected characteristic polynomial.
\begin{lemma}\label{lem:induct}
Suppose $\v_1,\ldots,\v_k$ are i.i.d. copies of a finitely supported random vector $\v$ with $\E\v\v^*=cI$. Then, with positive probability,
$$ \lambda_k \left(\sum_{i=1}^k \v_{i}\v_{i}^*\right) \ge \lambda_k
\left(\E
\chi\left[\sum_{i=1}^k \v_i\v_i^*\right]\right).$$
\end{lemma}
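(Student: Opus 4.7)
I would proceed by a greedy one-coordinate-at-a-time induction, conditioning on $r_1,\dots,r_j$ and appealing to Theorem~\ref{thm:interlace} at each step to show that some realization of $r_{j+1}$ preserves the bound on $\lambda_k$. For each $0\le j\le k$, define the conditional expected polynomial
$$q_j(x;r_1,\ldots,r_j) \;:=\; \E_{r_{j+1},\ldots,r_k}\,\chi\!\left[\sum_{i=1}^{j} r_ir_i^* + \sum_{i=j+1}^{k} r_ir_i^*\right]\!(x),$$
so that $q_0$ is the unconditional expected polynomial and $q_k(x;r_1,\ldots,r_k)=\chi[\sum_i r_ir_i^*](x)$. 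The inductive goal is to pick a realization with $\lambda_k(q_{j+1})\ge \lambda_k(q_j)$ at every step; chaining these inequalities yields the lemma.

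The first ingredient is an explicit formula for $q_j$. By the matrix determinant lemma, for any Hermitian $M$,
$$\det(xI-M-rr^*) \;=\; \det(xI-M)\bigl(1-r^*(xI-M)^{-1}r\bigr).$$
Taking expectations and using $\E r^*(xI-M)^{-1}r = c\,\tr((xI-M)^{-1})$ together with Jacobi's formula $\tr((xI-M)^{-1})\det(xI-M) = \chi[M]'(x)$ gives
$$\E_r \chi[M+rr^*](x) \;=\; \bigl(I-c\,\partial_x\bigr)\chi[M](x).$$
Iterating this shows that, conditional on $r_1,\dots,r_j$,
$$q_j(x;r_1,\ldots,r_j) \;=\; (I-c\,\partial_x)^{k-j}\,\chi\!\left[\sum_{i\le j} r_ir_i^*\right]\!(x),$$
so $q_j$ is just the differential operator $(I-c\,\partial_x)^{k-j}$ applied to the characteristic polynomial of the partial sum.

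The second (and main) ingredient is the common-interlacing step. Fix $r_1,\dots,r_j$, and consider the family $\{q_{j+1}(x;r_1,\ldots,r_j,r)\}_{r\in\mathrm{supp}(r_{j+1})}$. By Cauchy interlacing for rank-one updates, the polynomials $\chi[\sum_{i\le j} r_ir_i^* + rr^*]$ are all interlaced by the single polynomial $\chi[\sum_{i\le j} r_ir_i^*]$, so they share a common interlacing and every convex combination of them is real-rooted. I then claim that the operator $I-c\,\partial_x$ preserves real-rootedness: writing $p(x)-c\,p'(x) = -c\,e^{x/c}(e^{-x/c}p(x))'$ and applying Rolle's theorem to $e^{-x/c}p(x)$ (which shares the real roots of $p$ and tends to $0$ as $x\to\infty$) produces $n$ real roots of the derivative. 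Since $I-c\,\partial_x$ also commutes with convex combinations, every convex combination of the $q_{j+1}(\,\cdot\,;r_1,\ldots,r_j,r)$ remains real-rooted, and Theorem~\ref{thm:fell} supplies the desired common interlacing.

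Given the common interlacing, Theorem~\ref{thm:interlace} applied with the distribution of $r_{j+1}$ gives some $r\in\mathrm{supp}(r_{j+1})$ with $\lambda_k\bigl(q_{j+1}(\,\cdot\,;r_1,\ldots,r_j,r)\bigr)\ge \lambda_k\bigl(q_j(\,\cdot\,;r_1,\ldots,r_j)\bigr)$, since $q_j = \E_{r_{j+1}} q_{j+1}$. Inducting from $j=0$ to $j=k$ produces realizations $r_1,\ldots,r_k$ with
$$\lambda_k\!\left(\chi\!\left[\textstyle\sum_i r_ir_i^*\right]\right) \;=\; \lambda_k(q_k) \;\ge\; \lambda_k(q_0) \;=\; \lambda_k\!\left(\E\,\chi\!\left[\textstyle\sum_i r_ir_i^*\right]\right),$$
and, as the $r_i$ are finitely supported, this event occurs with positive probability. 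The main obstacle is the preservation of common interlacing under the differential operator $I-c\,\partial_x$; everything else is either the standard matrix determinant lemma or bookkeeping.
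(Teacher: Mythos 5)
Your proposal is correct and follows essentially the same route as the paper: the same conditional-expectation polynomials $q_{j_1,\ldots,j_\ell}$, the identity of Lemma~\ref{lem:iminusd} to write them as $(1-cD)^{k-\ell}$ applied to the partial-sum characteristic polynomial, Cauchy interlacing plus preservation of common interlacing under $(1-cD)$ (via Theorem~\ref{thm:fell}), and then Theorem~\ref{thm:interlace} to choose a good realization at each step. The only cosmetic difference is your proof that $(1-cD)$ preserves real-rootedness, via Rolle's theorem applied to $e^{-x/c}p(x)$, where the paper's Lemma~\ref{lem:iminusdproperties} instead counts sign changes of the rational function $1-cf'/f$; both are fine.
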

The second part is the calculation of the expected polynomial and the derivation of a bound on its
  roots. 
\begin{lemma}\label{lem:lowerbarrier} Suppose $\v_1,\ldots,\v_k$ are i.i.d. copies of a random vector
  $\v$ with $\E\v\v^*=I$. Then,
  $$\E\chi\left[\sum_{i=1}^k
  \v_i\v_i^*\right](x)=(1-D)^kx^n=x^{n-k}(1-D)^nx^k.$$
Moreover,
$$ \lambda_k\left((1-D)^nx^k\right)\ge \left(1-\sqrt{\frac{k}{n}}\right)^2n.$$
\end{lemma}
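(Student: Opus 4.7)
The plan is to tackle the two assertions in turn. For the formula for the expected polynomial, I will use the matrix determinant lemma, which gives $\det(xI - A - vv^*) = \det(xI - A) - v^*\mathrm{adj}(xI - A)v$ for any $A$ and $v$. Taking expectations over a single vector $v$ with $\E vv^* = I$ and using $\E[v^* M v] = \tr M$ together with $\tr(\mathrm{adj}(xI - A)) = \frac{d}{dx}\det(xI - A) = \chi'[A](x)$, the expectation collapses to
\[
\E_v\, \chi[A + vv^*](x) \;=\; \chi[A](x) - \chi'[A](x) \;=\; (1-D)\chi[A](x).
\]
Iterating this $k$ times starting from $\chi[0](x) = x^n$ yields $(1-D)^k x^n$. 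The equivalent form $x^{n-k}(1-D)^n x^k$ then follows from the binomial theorem: expanding both sides, the coefficient of $x^{n-j}$ is $(-1)^j \, k!\, n!/(j!\,(k-j)!\,(n-j)!)$ in either case, via the symmetry $\binom{k}{j} n!/(n-j)! = \binom{n}{j} k!/(k-j)!$.

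For the root bound, I will use a lower barrier-function argument in the spirit of \cite{BSS}. Given a real-rooted polynomial $p$, define for $y < \lambda_{\min}(p)$
\[
\Phi_p(y) \;:=\; -\frac{p'(y)}{p(y)} \;=\; \sum_i \frac{1}{\lambda_i(p) - y},
\]
which is positive, strictly increasing, and convex on $(-\infty, \lambda_{\min}(p))$. A direct calculation using $(1-D)p = p(1 + \Phi_p)$ yields the update identity
\[
\Phi_{(1-D)p}(y) \;=\; \Phi_p(y) \;-\; \frac{\Phi_p'(y)}{1 + \Phi_p(y)}.
\]
The strategy is to fix a parameter $\phi > 0$ and initialize $x_0 = -k/\phi$, where $\Phi_{x^k}(x_0) = \phi$. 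The key lemma to prove is the one-step shift claim: whenever $\Phi_p(x) \leq \phi$, one has $\Phi_{(1-D)p}(x + 1/(1+\phi)) \leq \phi$. Iterating $n$ times produces a point $x_n \geq -k/\phi + n/(1+\phi)$ that lies below the smallest root of $(1-D)^n x^k$. Optimizing by solving $k/\phi^2 = n/(1+\phi)^2$ gives $\phi = \sqrt{k}/(\sqrt{n} - \sqrt{k})$ and a lower bound of $(\sqrt{n}-\sqrt{k})^2 = n(1 - \sqrt{k/n})^2$, as required.

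The main obstacle is establishing the one-step shift claim cleanly. A naive mean-value estimate combined with the update formula produces a condition of the form $\delta \leq 1/(1 + \Phi_p(x+\delta))$, which involves the barrier at the advanced point rather than at $x$; since $\Phi_p$ is increasing, this is a priori weaker than what we want. The resolution, as in the barrier calculus of \cite{BSS}, uses the convexity of $\Phi_p$ (equivalently, the concavity of $1/\Phi_p$) to control the excess $\Phi_p(x+\delta) - \phi$ jointly with the correction term $\Phi_p'(x+\delta)/(1+\Phi_p(x+\delta))$ in the update identity, showing that at the critical shift $\delta = 1/(1+\phi)$ the correction dominates the excess. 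Once the shift lemma is in hand, the telescoping over $n$ steps and the optimization in $\phi$ are routine calculations.
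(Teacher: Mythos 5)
Your proposal is correct and follows essentially the same route as the paper: the matrix determinant lemma yields $\E_v\,\chi[A+vv^*]=(1-D)\chi[A]$, iteration gives $(1-D)^k x^n$ (the paper verifies $(1-D)^k x^n=x^{n-k}(1-D)^n x^k$ either term-by-term as you do or via a Gaussian/Wishart identity), and the root bound comes from the same lower barrier function $\Phi_f=-f'/f$, the same one-step shift by $1/(1+\varphi)$, and the same optimization $\varphi=\sqrt{k}/(\sqrt{n}-\sqrt{k})$. The one computation you defer --- the convexity/Cauchy--Schwarz estimate behind the shift lemma --- is exactly the step the paper also omits, citing Claim 3.6 of \cite{BSS}.
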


\subsection{Interlacing and $(1-D)$ operators}
Let us begin with the first part. To relate the expected characteristic
  polynomial to its summands, we will inductively apply Theorem
  \ref{thm:interlace}, which requires the existence of certain common
  interlacings.
These will be established by a combination of two ingredients. 
The first is the following classical fact, which says that rank-one updates 
  naturally cause interlacing.
\begin{lemma}[Cauchy's Interlacing Theorem]\label{lem:cauchy} If $A$ is a symmetric matrix and $v$
is a vector then $\charp{A}{x}$ interlaces $\charp{A+vv^*}{x}$.
\end{lemma}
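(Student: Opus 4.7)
The plan is to combine the matrix determinant lemma with elementary sign-analysis of a rational function, and then reduce to the generic case by continuity.

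First I would handle the generic case in which $A$ has $n$ distinct eigenvalues $\lambda_1 > \lambda_2 > \ldots > \lambda_n$ with orthonormal eigenvectors $u_1,\ldots,u_n$, and in which $c_i := |\langle v, u_i\rangle|^2 > 0$ for every $i$. Applying the matrix determinant lemma to $xI - A - vv^*$ gives
\[
\charp{A+vv^*}{x} \;=\; \charp{A}{x}\left(1 - \sum_{i=1}^n \frac{c_i}{x-\lambda_i}\right),
\]
since $v^*(xI-A)^{-1}v = \sum_i c_i/(x-\lambda_i)$ in the eigenbasis of $A$. Let $f(x)$ denote the rational factor in parentheses. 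Because each $c_i > 0$, the derivative $f'(x) = \sum_i c_i/(x-\lambda_i)^2$ is strictly positive wherever $f$ is defined, so $f$ is strictly increasing on each maximal interval of its domain.

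Next I would locate the zeros of $f$ by a sign analysis on each such interval. On any open interval $(\lambda_{i+1}, \lambda_i)$ between consecutive poles, $f$ runs continuously from $-\infty$ to $+\infty$, contributing exactly one zero; on $(\lambda_1, \infty)$, $f$ runs from $-\infty$ up to its horizontal asymptote $1$, contributing one further zero; and on $(-\infty,\lambda_n)$ every term $-c_i/(x-\lambda_i)$ is positive so $f > 1$ and there are no zeros. This yields exactly $n$ zeros of $f$, one root $\mu_1 > \lambda_1$ and one $\mu_k\in(\lambda_k,\lambda_{k-1})$ for each $k=2,\ldots,n$. Since the simple poles of $f$ cancel the simple zeros of $\charp{A}{x}$ in the product, these $\mu_k$ are precisely the eigenvalues of $A+vv^*$, and the strict chain
\[
\mu_n < \lambda_n < \mu_{n-1} < \lambda_{n-1} < \cdots < \mu_1,\qquad \lambda_1 < \mu_1,
\]
matches the definition of $\charp{A}{x}$ interlacing $\charp{A+vv^*}{x}$.

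Finally I would remove the genericity hypothesis by a standard perturbation. Given an arbitrary Hermitian $A$ and vector $v$, choose a small Hermitian perturbation $\epsilon E$ so that $A + \epsilon E$ has distinct eigenvalues and so that $v$ has nonzero inner product with each perturbed eigenvector; apply the generic case and let $\epsilon \to 0$, using continuity of polynomial roots in the coefficients. The (non-strict) interlacing condition is closed under such limits, so it passes to $(A,v)$. The only delicate point in the whole argument is the sign analysis of $f$ on each interval, which I expect to be the ``main'' step but is essentially routine once the matrix determinant lemma is in hand.
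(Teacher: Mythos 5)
Your proposal is correct and takes essentially the same route the paper intends: the paper offers no detailed proof of this classical lemma, only the remark that it follows from the matrix determinant lemma, which is exactly your starting identity, and your sign/monotonicity analysis of $1-\sum_i c_i/(x-\lambda_i)$ together with the limiting argument is the standard way to finish. One minor slip: the displayed chain transposes $\mu$ and $\lambda$; by your own analysis ($\mu_1>\lambda_1$ and $\mu_k\in(\lambda_k,\lambda_{k-1})$) it should read $\lambda_n<\mu_n<\lambda_{n-1}<\mu_{n-1}<\cdots<\lambda_1<\mu_1$, which is precisely the interlacing required by the paper's definition.
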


One can easily derive this from the
  {\em matrix determinant lemma}:
\begin{lemma}\label{lem:rank1update}
If $A$ is an invertible matrix and $u,v$ are vectors, then
\[
\mydet{A + uv^*} = \mydet{A} (1 + v^* A^{-1} u)
\]
\end{lemma}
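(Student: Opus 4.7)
The plan is to derive the identity from a Schur-complement computation on an auxiliary block matrix. I would introduce the $(n+1)\times(n+1)$ block matrix
\[
M = \begin{pmatrix} A & u \\ -v^* & 1 \end{pmatrix}
\]
and evaluate $\mydet{M}$ in two different ways using the standard block-LU factorization identities. Recall that for a block matrix with invertible top-left block $P$ one has $\mydet{\begin{pmatrix} P & Q \\ R & S \end{pmatrix}} = \mydet{P}\cdot\mydet{S - RP^{-1}Q}$, while for a block matrix with invertible bottom-right block $S$ one has $\mydet{\begin{pmatrix} P & Q \\ R & S \end{pmatrix}} = \mydet{S}\cdot\mydet{P - QS^{-1}R}$. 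Both versions follow immediately from writing $M$ as a product of two block-triangular matrices and using multiplicativity of the determinant on such matrices.

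Applying the first version with pivot $A$ (invertible by hypothesis), the Schur complement is the scalar $1 - (-v^*)A^{-1}u = 1 + v^*A^{-1}u$, so $\mydet{M} = \mydet{A}\cdot(1 + v^*A^{-1}u)$. Applying the second version with pivot the scalar $1$ in the lower-right corner, the Schur complement becomes the $n\times n$ matrix $A - u\cdot 1^{-1}\cdot(-v^*) = A + uv^*$, so $\mydet{M} = \mydet{A+uv^*}$. Equating these two expressions gives the desired identity.

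There is essentially no obstacle here: the entire argument rests on the block-LU factorization, and both of the chosen pivot blocks ($A$ itself and the scalar $1$) are trivially invertible. An alternative route would be to first factor $A+uv^* = A(I + A^{-1}uv^*)$ using multiplicativity to reduce the claim to $\mydet{I + wv^*} = 1 + v^*w$ with $w = A^{-1}u$, and then compute the latter determinant by choosing a basis that makes $I + wv^*$ upper triangular (namely $w$ together with a basis of $\ker(v^*)$, with the cases $w=0$ and $v^*w = 0$ handled separately). I prefer the block-matrix proof because it is marginally shorter and sidesteps any casework on whether $w$ lies in $\ker(v^*)$.
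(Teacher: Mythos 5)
The paper states this as a known classical fact (the matrix determinant lemma) and does not supply a proof, so there is no in-paper argument to compare against. Your block-matrix derivation is correct and standard: both pivots you use ($A$ by hypothesis, and the scalar $1$ trivially) are invertible, the two Schur complements come out as $1 + v^*A^{-1}u$ and $A + uv^*$ respectively, and equating the two evaluations of $\mydet{\begin{pmatrix} A & u \\ -v^* & 1 \end{pmatrix}}$ gives the identity. Your remark about the alternative route via $\mydet{I + wv^*} = 1 + v^*w$ is also accurate, including the observation that it requires a small case split that the block-matrix argument sidesteps.
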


The second ingredient is the following correspondence between 
  isotropic random rank one updates and differential operators.
\begin{lemma}\label{lem:iminusd} Suppose $\v$ is a random vector with $\E\v\v^*
= cI$ for some constant $c\ge 0$.
Then for every matrix $A$, we have
$$ \E \chi\left[A+\v\v^*\right](x) = (I-cD)\chi\left[A\right](x),$$
where $D$ denotes differentiation with respect to $x$.
\end{lemma}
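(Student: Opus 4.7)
The plan is a direct computation using Lemma \ref{lem:rank1update} followed by an application of Jacobi's formula for the derivative of a determinant. First I would condition on a realization of $v$ and apply the matrix determinant lemma with $A \to xI - A$ and $u \to -v$. Since $xI - A$ is invertible for all but finitely many $x$, and both sides of the ensuing identity are polynomials in $x$, the relation extends everywhere:
\begin{equation*}
\charp{A + vv^*}{x} = \mydet{xI - A}\bigl(1 - v^*(xI-A)^{-1}v\bigr) = \charp{A}{x} - v^*\,\mathrm{adj}(xI - A)\,v,
\end{equation*}
where $\mathrm{adj}$ denotes the classical adjugate (using $\mydet{M}M^{-1} = \mathrm{adj}(M)$). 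The crucial feature of this identity is that the correction term is \emph{linear} in the entries of the rank-one matrix $vv^*$, so it behaves well under expectation.

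Next I would take expectations over $v$. Using the elementary identity $v^* M v = \trace{M vv^*}$ for any fixed matrix $M$, together with linearity of trace and the isotropy hypothesis $\E vv^* = cI$,
\begin{equation*}
\E\bigl[v^*\,\mathrm{adj}(xI - A)\,v\bigr] = \trace{\mathrm{adj}(xI - A)\cdot \E vv^*} = c\,\trace{\mathrm{adj}(xI - A)}.
\end{equation*}
Finally I would invoke Jacobi's formula, which asserts that $D\,\mydet{xI - A} = \trace{\mathrm{adj}(xI - A)}$. This can be proved in one line by expanding the determinant by cofactors along the $i$-th row: the only appearance of $x$ in that row is the diagonal entry, so differentiating yields $\mathrm{adj}(xI - A)_{ii}$, and summing over $i$ gives the trace. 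Combining the three identities,
\begin{equation*}
\E\,\charp{A + vv^*}{x} = \charp{A}{x} - cD\,\charp{A}{x} = (I - cD)\charp{A}{x},
\end{equation*}
where $I$ denotes the identity operator on polynomials.

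I do not anticipate a genuine obstacle: the entire argument is three lines of linear algebra, provided one has the matrix determinant lemma (already supplied as Lemma \ref{lem:rank1update}) and Jacobi's formula for differentiating a determinant. The only mild technicality is that Lemma \ref{lem:rank1update} is stated for invertible matrices, but since both sides of each displayed equation are polynomials in $x$ agreeing on a cofinite set, they agree identically.
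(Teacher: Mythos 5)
Your proof is correct and takes essentially the same route as the paper's: both apply the matrix determinant lemma to $\det(xI-A-vv^*)$ and then take expectations using $v^*Mv=\tr(Mvv^*)$ together with $\E vv^*=cI$. The only difference is in the final step, where the paper expands $\det(xI-A)\,\tr\left((xI-A)^{-1}\right)$ via the eigenvalues of $A$ to recognize $D\chi[A](x)$, while you phrase the same identity as Jacobi's formula for the adjugate; the two are interchangeable, and your remark about extending the determinant lemma from a cofinite set of $x$ by polynomiality is a fine (implicit in the paper) bookkeeping point.
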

\begin{proof} 
Using Lemma~\ref{lem:rank1update}, we obtain
\begin{align*}
\E \det(xI-A-\v\v^*) &= \E\det(xI-A)(1-\v^*(xI-A)^{-1}\v)
\\&= \det(xI-A)(1- \trace{(\E \v\v^*) (xI-A)^{-1}})
\\&= \det(xI-A)\left(1-c\tr(xI-A)^{-1}\right)
\end{align*}
Letting $\lambda_1,\ldots,\lambda_n$ denote the eigenvalues of $A$, this quantity becomes
$$ \prod_{i=1}^n(x-\lambda_i)\left(1-c\sum_{i=1}^n
\frac{1}{x-\lambda_i}\right) = \chi(A)(x)-c\sum_{i=1}^n\prod_{j\neq
i}(x-\lambda_j) = (1-cD)\chi(A) (x),$$
as desired.
\end{proof}
The purpose of Lemma \ref{lem:iminusd} is twofold.
First, it allows us to easily calculate expected characteristic
  polynomials, which a priori could be intractably complicated sums.
Second, the operators $(1-cD)$ have other nice properties
  which witness that the expected polynomials we generate
  have real roots and common interlacings.
\begin{lemma}[Properties of Differential Operators]\label{lem:iminusdproperties}
\
\begin{enumerate}
\item [(1)] If $f$ has real roots then so does $(I-cD)f$.
\item [(2)] If $f_1,\ldots,f_m$ have a common interlacing, then so do
$(I-cD)f_1,\ldots, (1-cD)f_m$.
\end{enumerate}
\end{lemma}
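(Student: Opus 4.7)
The plan is to prove (1) directly via an exponential-weighting trick and Rolle's theorem, and then deduce (2) by combining (1) with Theorem~\ref{thm:fell} (and its converse). Part (1) is the conceptual heart; part (2) is a formal consequence.

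For (1), assume $c > 0$ (when $c = 0$ the operator is the identity). The key observation is the identity
\[
(1 - cD) f(x) \;=\; -c\, e^{x/c}\, \frac{d}{dx}\!\left[e^{-x/c} f(x)\right],
\]
so the real zeros of $(1-cD)f$ coincide with the real zeros of $g'$, where $g(x) := e^{-x/c} f(x)$. Let the distinct real roots of $f$ be $\lambda_1 < \cdots < \lambda_s$ with multiplicities $m_1, \ldots, m_s$ summing to $n$. A local computation shows that each $\lambda_i$ is a zero of $(1-cD)f$ of multiplicity exactly $m_i - 1$, contributing $n - s$ zeros. Rolle's theorem applied to $g$ on each interval $(\lambda_i, \lambda_{i+1})$ yields $s - 1$ additional zeros of $g'$. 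Finally, since $g(\lambda_s) = 0$, $g(x) \to 0$ as $x \to +\infty$ (because $e^{-x/c}$ decays exponentially while $f$ grows polynomially), and $g$ is nonzero immediately to the right of $\lambda_s$, $g$ must attain an interior extremum on $(\lambda_s, \infty)$, giving one more zero of $g'$. This accounts for $n$ real zeros of $(1-cD)f$, which has degree $n$, so all its roots are real.

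For (2), suppose $f_1, \ldots, f_m$ share a common interlacing and let $f := \sum_i \mu_i f_i$ be any convex combination. Then $f$ is real-rooted: this is the converse direction of Theorem~\ref{thm:fell}, and it follows from an intermediate-value argument on the intervals $I_n \le \cdots \le I_1$ witnessing the common interlacing. At every point in the gaps between these intervals all $f_j$ share a common sign, hence so does $f$, and comparing signs on the two sides of each $I_i$ shows that $f$ changes sign across each interval, producing one real root per interval. By part (1), $(1-cD)f = \sum_i \mu_i (1-cD) f_i$ is then real-rooted as well. Since this holds for every convex combination of $(1-cD)f_1, \ldots, (1-cD) f_m$, Theorem~\ref{thm:fell} itself applies to this family and delivers a common interlacing.

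The main obstacle is the tightness of the count in part (1): one must produce exactly $n$ real zeros, neither more nor fewer. The exponential reformulation via $g$ is precisely what balances the books, because it forces the multiplicity of each $\lambda_i$ in $(1-cD)f$ to drop by exactly one and simultaneously supplies an extra zero at infinity, so that the multiplicities from the $\lambda_i$'s, the interior Rolle zeros, and the zero in $(\lambda_s, \infty)$ add up to the full degree.
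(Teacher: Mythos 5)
Your proposal is correct, and part (2) follows the paper's own route almost verbatim: the paper deduces real-rootedness of every convex combination $\sum_i \mu_i f_i$ from the common interlacing (it simply cites Theorem~\ref{thm:interlace} for this, whereas you re-derive it as the converse of Theorem~\ref{thm:fell} via the sign/intermediate-value argument), applies part (1), uses linearity of $(1-cD)$, and closes with Theorem~\ref{thm:fell}. Where you genuinely diverge is part (1). The paper studies the rational function
\[
\frac{(1-cD)f(x)}{f(x)} \;=\; 1 - c\sum_{i=1}^n \frac{1}{x-\lambda_i},
\]
and locates one real zero of $f-cf'$ between consecutive poles (plus one beyond them) by the intermediate value theorem; your argument instead uses the factorization $(1-cD)f = -c\,e^{x/c}\bigl(e^{-x/c}f\bigr)'$ and counts zeros of $g'$ for $g=e^{-x/c}f$ via Rolle, tracking multiplicities at repeated roots explicitly and extracting the extra zero on $(\lambda_s,\infty)$ from the decay of $e^{-x/c}$. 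The two arguments are close cousins (both are bookkeeping of $n$ real zeros of $f-cf'$), but yours handles multiple roots head-on where the paper factors out common roots of $f$ and $f'$, and it makes the ``extra'' root to the right of the spectrum visible; the price is that your decay argument uses $c>0$ (harmless here, since the lemma is only invoked with $c\ge 0$ as in Lemma~\ref{lem:iminusd}, and $c<0$ would follow by the reflection $x\mapsto -x$), while the paper's pole-counting is indifferent to the sign of $c$. One small point to keep in mind: your sign argument for the converse of Theorem~\ref{thm:fell} implicitly assumes the $f_j$ have leading coefficients of the same sign (as in Theorem~\ref{thm:interlace}); this is satisfied in all uses here, where the polynomials are monic characteristic polynomials.
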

\begin{proof} 
For part (1), assume that $f$ and $f'$ have no common roots
  (otherwise, these are also common roots of $f$ and $f-cf'$ which are clearly
  real).
Consider the rational function
$$ \frac{f(x)-cf'(x)}{f(x)}=1-c\frac{f'(x)}{f(x)}=1-c\sum_{i=1}^n\frac{1}{x-\lambda_i}$$
  where $\lambda_i$ are the roots of $f$.
Inspecting the poles of this function and applying the intermediate value
  theorem shows that $f-cf'$ has the same number of zeros as $f$, all distinct
  from those of $f$.

For part (2), Theorem \ref{thm:interlace} tells us that all convex combinations
$\sum_{i=1}^m\mu_if_i$ have real roots. By part (1) it follows that all 
$$(1-cD)\sum_{i=1}^m\mu_if_i = \sum_{i=1}^m \mu_i (1-cD)f_i$$
also have real roots. By Theorem \ref{thm:fell}, this means that the $(1-cD)f_i$
must have a common interlacing.\end{proof}

With these facts in hand, we can easily complete the proof of Lemma
  \ref{lem:induct}.
\begin{proof} 
Assume $\v$ is uniformly distributed on some set $v_1,\ldots,v_m\in\C^n$. We need to show that
  there is a choice of indices $j_1,\ldots, j_k\in [m]$ for which 
$$ \lambda_k \left(\sum_{i=1}^k v_{j_i}v_{j_i}^*\right) \ge \lambda_k
\left(\E
\chi\left[\sum_{i=1}^k \v_i\v_i^*\right]\right).$$
For any partial assignment $j_1,\ldots,j_\ell$ of the indices,
 consider the ``conditional expectation'' polynomial:
$$ q_{j_1,\ldots,j_\ell}(x) := \E_{\v_{\ell+1},\ldots,\v_k} 
\chi\left[\sum_{i=1}^\ell v_{j_i}v_{j_i}^* +  \sum_{i=\ell+1}^k
\v_i\v_i^*\right].$$
Since the $\v_i$ are independent, and $\E {\v_{i}} = (1/m) I$, applying Lemma
  \ref{lem:iminusd} $k-\ell$ times reveals that:
  $$q_{j_1,\ldots,j_\ell}(x)=(1-(1/m)D)^{k-\ell}\chi\left[\sum_{i=1}^\ell
  v_{j_i}v_{j_i}^*\right](x).$$

We will show that there exists a $j_{\ell+1}\in [m]$ such that 
\begin{equation}\label{eqn:cinterlace}
\lambda_k(q_{j_1,\ldots,j_{\ell+1}})\ge
\lambda_k(q_{j_1,\ldots,j_\ell}),\end{equation}
which by induction will complete the proof.
Consider the matrix
$$A = \sum_{i=1}^\ell v_{j_i}v_{j_i}^*,$$
By Lemma \ref{lem:cauchy}, $\chi[A]$ interlaces
$\chi[A+v_{j_{\ell+1}}v_{j_{\ell+1}}^*]$ for
every $j_{\ell+1}\in [m]$. 
Lemma \ref{lem:iminusdproperties} tells us $(1-(1/m)D)$ operators preserve common
interlacing, so the polynomials 
$$ (1-(1/m)D)^{k-(\ell+1)}\chi(A+v_{j_{\ell+1}}v_{j_{\ell+1}}^*) =
q_{j_1,\ldots,j_\ell,j_{\ell+1}}(x)$$
must also have a common interlacing.
Thus, some $j_{\ell+1}\in [m]$ must satisfy \eqref{eqn:cinterlace}, as desired.
\end{proof}

\subsection{Laguerre Polynomials and the Univariate Barrier
Argument}\label{sec:unibarrier}
We now move on to the second part, Lemma \ref{lem:lowerbarrier}, in which we
  prove a bound on the $k$th root of the expected polynomial, which after
  rescaling by a factor of $m$ is just:
  $$\E \chi\left[m\cdot\sum_{i=1}^k  \v_i\v_i^*\right](x)= (1-D)^kx^n.$$
We begin by observing that $(1-D)^kx^n=x^{n-k}(1-D)^nx^k$. This may be verified by
term-by-term calculation, or by appealing to the correspondence between $(1-D)$
operators and random isotropic rank one updates established in Lemma
\ref{lem:iminusd} as follows.
Let $G$ be an $n$-by-$k$ matrix of random, independently distributed, $N (0,1)$
  entries.
The covariance matrix of each column is the $n$-dimensional identity matrix,
  and the covariance of each row is the $k$-dimensional identity.
So,
  \begin{align*}
  (1-D)^kx^n &= \E_{G} \chi(GG^*)(x)
  \\&= \E_G x^{n-k} \chi(G^*G)(x)
  \\&= x^{n-k}(1-D)^n x^k.
\end{align*}

Thus, we would like to lower bound the least root of $(1-D)^nx^k$.
The easiest way to do this is to observe that it is a constant multiple of a
  known polynomial, namely an
  {\em associated Laguerre polynomial} $\L_k^{(n-k)}(x)$.
These are classical orthogonal polynomials and a lot is known about the
  locations of their roots; in particular, they are known to be contained in the
  interval $[n(1-\sqrt{k/n})^2,n(1+\sqrt{k/n})^2]$ (see, for instance,
  \cite{krasikov}).

In order to keep the presentation self-contained, and also because
  it is a key tool in the proof of Kadison--Singer and more generally in the
  analysis of expected characteristic polynomials, we now give a direct
  proof of Lemma \ref{lem:lowerbarrier} based on the ``barrier method'' introduced
  in \cite{BSS}.
The basic idea is to study the effect of each $(1-D)$ operator on the roots of a
  polynomial $f$ via the associated rational function 
\begin{equation}\label{eqn:barrierdef}
\Phi_f(b):=-\frac{f'(b)}{f(b)}=-\frac{\partial \log f(b)}{\partial
b}=\sum_{i=1}^n\frac{1}{\lambda_i-b},\end{equation}
  which we will refer to as the {\em lower barrier function}.
The poles of this function are the roots $\lambda_1,\ldots,\lambda_n$ of $f$, and we remark that it is the
  same up to a multiplicative factor of $(-1/n)$ as the Stieltjes transform of the discrete measure supported on
  these roots.
It is immediate from the above expression that $\Phi_f(b)$ is
  positive, monotone increasing, and convex for $b$ is strictly less than
  the roots of $f$, and that it tends to infinity as $b$ approaches the smallest root of
  $f$ from below.

We now use the {\em inverse} of $\Phi_f$ to define a robust lower bound for the roots of a polynomial
$f$:
  $$\smin_\varphi(f):=\min\{x\in\R:\Phi_f(x)=\varphi\},$$
where $\varphi>0$ is a sensitivity parameter.
Since $\Phi_f(b)\rightarrow 0$ as $b\rightarrow -\infty$, it is immediate that
  we always have $\smin_\varphi(f)\le\lambda_{min}(f)$.
The number $\varphi$ controls the tradeoff between how accurate a lower bound 
  $\smin_\varphi$ is an how smoothly it varies --- in particular the extreme cases are
  $\smin_\infty(f)=\lambda_{min}(f)$, which is not always well-behaved, and 
  $\smin_0(f)=-\infty$, which doesn't even depend on $f$.
This quantity was implicitly introduced and used in \cite{BSS} and explicitly defined in
  \cite{sv}, where it was called the `soft spectral edge'; for an intuitive
  discussion of its behavior in terms of an electrical repulsion model, we
  refer the reader to the latter paper.

We also remark that the inverse Stieltjes transform was used by Voiculescu in
  his development of Free Probability theory to
  study the limiting spectral distributions of certain random matrix ensembles as the
  dimension tends to infinity.
We view the use of
  $\smin$ as a non-asymptotic analogue of that idea,
  except that we use it to reason about the edge of the
  spectrum rather than the bulk.

The following lemma tells us that $\smin_\varphi(f)$ grows in a smooth and predictable way
  when we apply a $(1-D)$ operator to $f$.
It is similar to Lemma 3.4 of \cite{BSS}, which was written in
  the language of random rank one updates of matrices.

\begin{lemma} \label{lem:lowershift} If $f$ has real roots and $\varphi>0$, then
  $$\smin_\varphi((1-D)f)\ge \smin_\varphi(f)+
  \frac{1}{1+\varphi}.$$
\end{lemma}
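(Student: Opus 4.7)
My plan is to reduce the statement $\smin_\varphi((1-D)f) \ge b + \delta$, where $b := \smin_\varphi(f)$ and $\delta := 1/(1+\varphi)$, to a polynomial inequality and close it with one application of Cauchy--Schwarz. Starting from $(1-D)f(x) = f(x)(1+\Phi_f(x))$ (which follows from $f' = -f\Phi_f$), logarithmic differentiation gives the key identity
\[
\Phi_{(1-D)f}(x) \;=\; \Phi_f(x) - \frac{\Phi_f'(x)}{1+\Phi_f(x)}.
\]
Since each $1/(\lambda_i - b) \le \sum_j 1/(\lambda_j - b) = \varphi$, we have $\lambda_i - b \ge 1/\varphi > \delta$, so $b+\delta < \lambda_{\min}(f) \le \lambda_{\min}((1-D)f)$, where the second inequality uses that $(1-D)f = f(1+\Phi_f) > 0$ on $(-\infty, \lambda_{\min}(f))$. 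On $(-\infty, \lambda_{\min}((1-D)f))$ the function $\Phi_{(1-D)f}$ is continuous and strictly increasing from $0$ to $+\infty$, so it suffices to show $\Phi_{(1-D)f}(b+\delta) \le \varphi$, which after rearrangement reads
\[
(\Phi_f(y) - \varphi)(1 + \Phi_f(y)) \;\le\; \Phi_f'(y), \qquad y := b + \delta. \quad (\star)
\]

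To prove $(\star)$ I would change variables to $p_i := \delta/(\lambda_i - y) > 0$. A short computation shows $p_i/(1+p_i) = \delta/(\lambda_i - b)$, so the hypothesis $\Phi_f(b) = \varphi$ translates to $\sum_i p_i/(1+p_i) = \delta\varphi = 1 - \delta$. Setting $S = \sum_i p_i$ and $T = \sum_i p_i^2$, we have $\Phi_f(y) = S/\delta$ and $\Phi_f'(y) = T/\delta^2$, and after clearing $\delta^2$ the inequality $(\star)$ becomes $(S+\delta-1)(S+\delta) \le T$. Using the algebraic identity $p_i - p_i^2/(1+p_i) = p_i/(1+p_i)$, the constraint gives $X := \sum_i p_i^2/(1+p_i) = S + \delta - 1$; combined with $T = X + \sum_i p_i^3/(1+p_i)$, the target reduces to $X^2 \le \sum_i p_i^3/(1+p_i)$. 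This follows from Cauchy--Schwarz applied to the decomposition $p_i^2/(1+p_i) = \sqrt{p_i/(1+p_i)} \cdot \sqrt{p_i^3/(1+p_i)}$:
\[
X^2 \;\le\; \Big(\sum_i \frac{p_i}{1+p_i}\Big)\Big(\sum_i \frac{p_i^3}{1+p_i}\Big) \;=\; (1-\delta)\sum_i \frac{p_i^3}{1+p_i} \;\le\; \sum_i \frac{p_i^3}{1+p_i},
\]
where the last step uses $1 - \delta \le 1$.

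The main obstacle is that the bound is already tight in the limit $f = x^n$ with $n \to \infty$, so any proof must exploit more than the convexity of $\Phi_f$ alone: the secant inequality $\Phi_f(y) - \varphi \le \delta \Phi_f'(y)$ produces only a borderline that fails to close $(\star)$ because $\delta = 1/(1+\varphi)$ is exactly the critical value. The substitution $p_i = \delta/(\lambda_i - y)$ is precisely what makes the constraint $\sum p_i/(1+p_i) = 1 - \delta$ visible, and the resulting slack $1 - \delta \le 1$ is exactly what Cauchy--Schwarz needs to close the inequality.
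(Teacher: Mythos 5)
Your proposal is correct and follows essentially the same route as the paper: the same identity $\Phi_{(1-D)f}=\Phi_f-\Phi_f'/(1+\Phi_f)$ and the same reduction to showing $\Phi_{(1-D)f}(b+\delta)\le\varphi$ at $\delta=1/(1+\varphi)$, with $b+\delta$ still below the roots. The only difference is that the paper outsources the resulting inequality to Claim 3.6 of \cite{BSS} (``applying Cauchy--Schwarz appropriately''), whereas you carry that computation out in full via the substitution $p_i=\delta/(\lambda_i-y)$ — a clean, self-contained verification (your one small slip, claiming $f(1+\Phi_f)>0$ below $\lambda_{\min}(f)$ rather than merely nonvanishing, is immaterial since nonvanishing is all the argument needs).
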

\begin{proof}
Let $b=\smin_\varphi(f)$. To prove the claim it suffices to find a $\delta\ge
(1+\varphi)^{-1}$ such that $b+\delta$ is below the roots of $f$ and 
$\Phi_{(1- D)f}(b+\delta)\le\varphi$.
We begin by writing the barrier function of $(1- D)$ in terms of the
  barrier function of $f$:
\begin{equation}\label{eqn:barrierupdate}
\Phi_{(1- D)f} = -\frac{(f- f')'}{f- f'} =
-\frac{(f(1+\Phi_f))'}{f(1+\Phi_f)} = -\frac{f'}{f}-\frac{
\Phi_f'}{1+\Phi_f}=\Phi_f-\frac{\Phi_f'}{1+\Phi_f}.
\end{equation}
This identity tells us that for any $\delta\ge 0$:
$$\Phi_{(1- D)f}(b+\delta) =
\Phi_f(b+\delta)-\frac{\Phi_f'(b+\delta)}{1+\Phi_f(b+\delta)},$$
which is at most $\varphi=\Phi_f(b)$ whenever
$$\frac{\Phi_f'(b+\delta)}{1+\Phi_f(b+\delta)} 
\ge \Phi_f(b+\delta)-\Phi_f(b).$$
This is in turn equivalent to
\[
\frac{
\Phi_f'(b+\delta)
}{
\Phi_f(b+\delta)-\Phi_f(b)
}
-
\Phi_f(b+\delta)
\geq 1.
\]
Expanding each $\Phi_f$ as a sum of terms as in \eqref{eqn:barrierdef} and applying Cauchy-Schwartz
  appropriately reveals\footnote{The simple but slightly cumbersome calculation appears as Claim 3.6 of
  \cite{BSS}; we have chosen to omit it here for the sake of brevity.}
  that the left-hand side of this inequality it at least
\[
   1 / \delta  - \Phi_f(b)
\]
This is at least $1$ for all $\delta\le (1+\varphi)^{-1}$.

We conclude that $\Phi_{(1- D)f}(b+\delta)$ is bounded by $\varphi$ for all $\delta\in
[0,(1+\varphi)^{-1}]$, which implies in particular that $b+\delta$ is
below the roots of $(1- D)f$.
 \end{proof}

Applying the lemma $n$ times immediately yields the following bound on
our polynomial of interest:
\begin{align*}
\lambda_k\left((1-D)^nx^k\right) 
&\ge\smin_\varphi\left((1-D)^nx^k\right) 
\\&\ge \smin_\varphi(x^k)+\frac{n}{1+\varphi}
\\&= -\frac{k}{\varphi}+\frac{n}{1+\varphi}
\qquad\textrm{since $\Phi_{x^k}(b)=-k/b$.}
\end{align*}

Setting $\varphi=\frac{\sqrt{k}}{\sqrt{n}-\sqrt{k}}$ yields Lemma \ref{lem:lowerbarrier},
  completing the proof of Theorem \ref{thm:isotropic}.
We remark that we have, as a byproduct, derived a sharp bound on the least
  root of an associated Laguerre polynomial.

In Lemma \ref{lem:smax} we use a multivariate version of the analogous bound for the largest
  root of the associated Laguerre polynomial.
A crucial aspect of the proof of the upper bound on the largest root is that
  it essentially depends only on the convexity and monotonicity of the barrier
  function.
For a real-rooted polynomial $f$, we define the {\em upper barrier function} as
  $\Phi^{f} (b) = f' (b) / f (b)$
and
\[
  \smaxx_\varphi(f):=\max\{x\in\R:\Phi^{f}(x)=\varphi\}.
\]

\begin{lemma}\label{lem:uppersmax}
 If $f$ has real roots and $\varphi>0$, then
  $$\smaxx_\varphi((1-D)f)\le \smaxx_\varphi(f)+
  \frac{1}{1-\varphi}.$$
\end{lemma}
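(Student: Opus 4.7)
The plan is to mirror the proof of Lemma~\ref{lem:lowershift} with the signs appropriately flipped, exploiting the fact that above the largest root of $f$, the upper barrier function $\Phi^{f}(x)=\sum_i 1/(x-\lambda_i)$ is positive, strictly decreasing, and convex. Setting $b=\smaxx_\varphi(f)$, so $\Phi^{f}(b)=\varphi$, it suffices to exhibit a $\delta\le 1/(1-\varphi)$ such that $b+\delta$ lies above every root of $(1-D)f$ and $\Phi^{(1-D)f}(b+\delta)\le\varphi$; monotonicity of $\Phi^{(1-D)f}$ above its largest pole then gives $\smaxx_\varphi((1-D)f)\le b+\delta$, which is the desired conclusion.

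The first step is to derive the analogue of identity~\eqref{eqn:barrierupdate}. Writing $f-f'=f(1-\Phi^{f})$ (valid above the roots, where $\varphi<1$ makes $1-\Phi^{f}>0$) and differentiating yields
\[
\Phi^{(1-D)f}=\Phi^{f}-\frac{(\Phi^{f})'}{1-\Phi^{f}}.
\]
Substituting $b+\delta$ and using $\Phi^{f}(b)=\varphi$, the target $\Phi^{(1-D)f}(b+\delta)\le\varphi$ rearranges to
\[
\frac{-(\Phi^{f})'(b+\delta)}{\Phi^{f}(b)-\Phi^{f}(b+\delta)}+\Phi^{f}(b+\delta)\le 1.
\]
Convexity of $\Phi^{f}$ above the largest root says $-(\Phi^{f})'$ is positive and monotone decreasing there, so
\[
\Phi^{f}(b)-\Phi^{f}(b+\delta)=\int_0^\delta (-(\Phi^{f})'(b+t))\,dt \ge \delta\cdot(-(\Phi^{f})'(b+\delta)),
\]
which bounds the first summand by $1/\delta$. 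Since $\Phi^{f}$ is decreasing, also $\Phi^{f}(b+\delta)\le\varphi$, so a sufficient condition is $1/\delta+\varphi\le 1$, i.e., $\delta\ge 1/(1-\varphi)$. Taking $\delta=1/(1-\varphi)$ gives the claimed shift.

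The main obstacle, exactly as in Lemma~\ref{lem:lowershift}, is that the rational manipulation above presupposes $b+\delta$ lies strictly above every root of $(1-D)f$; otherwise $\Phi^{(1-D)f}(b+\delta)$ does not encode a root-localization. I would discharge this by the same continuity/continuation argument as in the lower-barrier case: real-rootedness of $(1-D)f$ is Lemma~\ref{lem:iminusdproperties}(1), so for very large $\delta$ the point $b+\delta$ is manifestly above every root of $(1-D)f$, and as $\delta$ decreases toward $1/(1-\varphi)$, the pointwise estimate forces $\Phi^{(1-D)f}(b+\delta)\le\varphi$ throughout, which rules out the only possible failure mode, namely $b+\delta$ colliding with a root of $(1-D)f$ from above (where $\Phi^{(1-D)f}$ would necessarily diverge to $+\infty$).
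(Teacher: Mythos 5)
Your proof is correct and follows essentially the same route as the paper's: the identity $\Phi^{(1-D)f}=\Phi^{f}-(\Phi^{f})'/(1-\Phi^{f})$, convexity to bound $\Phi^{f}(b)-\Phi^{f}(b+\delta)\ge\delta\,(-(\Phi^{f})'(b+\delta))$, and monotonicity to reduce to $\delta\ge 1/(1-\varphi)$. Your final paragraph handles a point the paper leaves implicit (that $b+\delta$ lies above the roots of $(1-D)f$); your continuity argument works, though it can be shortened by noting that for all $x\ge b$ one has $(1-D)f(x)=f(x)\bigl(1-\Phi^{f}(x)\bigr)>0$ since $\Phi^{f}(x)\le\varphi<1$.
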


\begin{proof}
Let $b = \smaxx_\varphi(f)$.
As before, we may derive 
\[
\Phi^{(1- D)f} 
=\Phi^{f}- (D \Phi^{f}) / ( 1-\Phi^{f}).
\]
So, to show that
\[
\smaxx_\varphi((1-D)f) \leq b + \delta ,
\]
it suffices to prove that
\[
\Phi^{f} (b) - \Phi^{f} (b+\delta)
\geq 
\frac{-D \Phi^{f} (b+\delta)}{1 - \Phi^{f} (b+d)}.
\]
As $\Phi^{f} (b)$ is monotone decreasing for $b$ above the roots of $f$,
  $D \Phi^{f} (b+\delta)$ is negative.
As $\Phi^{f} (b)$ is convex for the same $b$,
\[
\Phi^{f} (b) - \Phi^{f} (b+\delta)
\geq \delta (-D \Phi^{f} (b+\delta)).
\]
Thus, we only require
\[
  \delta \geq \frac{1}{1 - \Phi^{f} (b+d)}.
\]
As $\Phi^{f} (b)$ is monotone decreasing, this is satisfied
  for $\delta = 1 / (1 - \varphi)$.
\end{proof}

Setting $\varphi=\frac{\sqrt{k}}{\sqrt{n}+\sqrt{k}}$,
  we obtain our upper bound the largest root of an associated Laguerre polynomial.
\begin{lemma}\label{lem:upperLaguerre}
The largest root of  
  $(1-D)^n x^k$
 is at most $n(1+\sqrt{k/n})^2$.
\end{lemma}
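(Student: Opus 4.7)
The plan is to mirror exactly the proof of Lemma \ref{lem:lowerbarrier}, but now using the upper barrier function $\Phi^{f}$ and the shift inequality provided by Lemma \ref{lem:uppersmax}. Since $\smaxx_\varphi(f)$ is by definition an upper bound on the largest root of $f$ for any $\varphi > 0$ where it is defined, it suffices to produce a single $\varphi \in (0,1)$ for which $\smaxx_\varphi((1-D)^n x^k)$ does not exceed $n(1+\sqrt{k/n})^2$.

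First I would compute the base case: for $f(x) = x^k$, we have $\Phi^{x^k}(b) = k/b$, so the unique positive solution to $\Phi^{x^k}(b) = \varphi$ is $b = k/\varphi$, giving $\smaxx_\varphi(x^k) = k/\varphi$. Next I would apply Lemma \ref{lem:uppersmax} inductively $n$ times (once for each $(1-D)$ factor). Each application increases $\smaxx_\varphi$ by at most $1/(1-\varphi)$, so by telescoping
\[
\smaxx_\varphi\bigl((1-D)^n x^k\bigr) \le \frac{k}{\varphi} + \frac{n}{1-\varphi}.
\]
Finally I would optimize over $\varphi$. Setting $\varphi = \sqrt{k}/(\sqrt{n}+\sqrt{k})$, a direct calculation gives $k/\varphi = k + \sqrt{kn}$ and $n/(1-\varphi) = n + \sqrt{kn}$, which sum to $(\sqrt{n}+\sqrt{k})^2 = n(1+\sqrt{k/n})^2$, as required.

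The main thing to verify carefully is that Lemma \ref{lem:uppersmax} can in fact be applied at each step, i.e., that the chosen $\varphi$ is a legitimate value of $\Phi^{f}$ above all roots of $f$ throughout the iteration. Since $\Phi^{x^k}$ ranges over all of $(0,\infty)$ as $b$ varies above the roots, and since Lemma \ref{lem:uppersmax} guarantees that $\smaxx_\varphi$ stays above the roots after each application (so long as $\varphi < 1$), this is automatic as long as $\varphi = \sqrt{k}/(\sqrt{n}+\sqrt{k}) < 1$, which holds trivially. The only subtle point — and the one closest to being an obstacle — is that the bound $1/(1-\varphi)$ from Lemma \ref{lem:uppersmax} deteriorates as $\varphi \to 1$, so a less clever choice of $\varphi$ would not give the correct constant; the optimization of the two competing terms $k/\varphi$ and $n/(1-\varphi)$ is what produces the sharp Laguerre bound.
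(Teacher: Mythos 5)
Your proposal is correct and follows exactly the paper's argument: apply Lemma \ref{lem:uppersmax} $n$ times starting from $\smaxx_\varphi(x^k)=k/\varphi$ and set $\varphi=\sqrt{k}/(\sqrt{n}+\sqrt{k})$, which is precisely how the paper obtains Lemma \ref{lem:upperLaguerre}. The only difference is that you spell out the telescoping and the optimization, which the paper leaves implicit.
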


\section{Mixed Characteristic Polynomials}\label{sec:mixed}
The argument given in the previous section is a special case of a more general principle: that
  the expected characteristic polynomials of certain random matrices can be expressed
  in terms of differential operators, which can then be used to establish the
  existence of common interlacings as well as to analyze the roots
  of the expected polynomials themselves.
In the isotropic case of Bourgain--Tzafriri, this entire chain of reasoning can
  be carried out by considering univariate polynomials only.
Morally, this is because the covariance matrices of all of the random vectors
  involved are multiples of the identity (which trivially commute with each other), 
  and all of the characteristic polynomials involved are simple univariate linear 
  transformations of each other (of type $(I-cD)$).

On the other hand, the proofs of Kadison-Singer and existence of Ramanujan graphs involve analyzing
  sums of independent rank one matrices which come from {\em non-identically
  distributed} distributions whose covariance matrices do not commute.
This leads to a much more general family of expected polynomials which we call
  {\em mixed characteristic polynomials}.
The special structure of these polynomials is revealed crisply when we view them
  as restrictions of certain multivariate polynomials.
Their qualitative and quantitative properties are, correspondingly,
  established using multivariate differential operators and barrier functions,
  which are analyzed using tools from the theory of real stable polynomials.

In the remainder of this section we will sketch a proof of Theorem \ref{thm:comparison}. 
The proof hinges on the following central identity, which describes the 
  general correspondence between sums of independent random rank one matrices
  and (multivariate) differential operators.
\begin{theorem}\label{thm:mixed}
Let $r_{1}, \dots , r_{m}$ be independent random column vectors in $\Complex{d}$.
For each $i$, let $A_{i} =  \expec{}{r_{i} r_{i}^{*}}$.
Then,
\begin{equation}\label{eqn:mixed1}
\expec{}{\charp{\sum_{i=1}^{m} r_{i} r_{i}^{*}}{x}}
= 
\left(\prod_{i=1}^{m} 1 - \partial_{z_{i}} \right) 
\mydet{x I + \sum_{i=1}^{m} z_{i} A_{i}}
\Big|_{z_{1} = \dots = z_{m} = 0}.
\end{equation}
\end{theorem}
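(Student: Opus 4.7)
The plan is to reduce the multivariate identity to a single-variable rank-one lemma and then iterate. Specifically, I would first isolate the following base ingredient: for any Hermitian matrix $M$ and a random vector $r$ with $\E[rr^*]=A$,
\[
\E_r \det(M - rr^*) = (1 - \partial_z) \det(M + zA)\big|_{z=0}.
\]
To verify this, I would assume first that $M$ is invertible and apply the matrix determinant lemma (Lemma~\ref{lem:rank1update}): $\det(M - rr^*) = \det(M)(1 - r^*M^{-1}r)$, then take expectations using $\E[r^*M^{-1}r] = \tr(M^{-1}A)$. On the other side, $\det(M + zA) = \det(M)\det(I + zM^{-1}A)$, whose derivative at $z=0$ is $\det(M)\tr(M^{-1}A)$. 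The two expressions agree. The non-invertible case follows by the polynomial-identity principle: both sides are polynomial in the entries of $M$, and the identity holds on the dense set of invertible $M$.

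Next, I would prove a slightly generalized version of \eqref{eqn:mixed1} by induction on $m$: for any Hermitian matrix $N$ and independent $r_1,\dots,r_m$ with $\E[r_ir_i^*]=A_i$,
\[
\E \det\!\Big(N - \sum_{i=1}^m r_i r_i^*\Big) = \prod_{i=1}^m (1 - \partial_{z_i}) \det\!\Big(N + \sum_{i=1}^m z_i A_i\Big)\Big|_{z_1=\cdots=z_m=0}.
\]
The base case $m=0$ is trivial. For the inductive step, condition on $r_1,\dots,r_{m-1}$ and apply the base lemma with $M := N - \sum_{i<m} r_i r_i^*$ and $r := r_m$. Then commute $\partial_{z_m}$ with the remaining expectation (permissible because everything in sight is polynomial), and invoke the inductive hypothesis with the new ``base matrix'' $N + z_m A_m$. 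The differential operators $\partial_{z_i}$ commute pairwise and with expectation, so the product in front assembles cleanly. Setting $N = xI$ gives Theorem~\ref{thm:mixed}.

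The main conceptual point, rather than an obstacle, is recognizing that the natural inductive statement must allow $N$ to be an arbitrary matrix (not merely $xI$), so that the application of the inductive hypothesis inside the $(1-\partial_{z_m})$ operator makes sense with $N$ replaced by $N + z_m A_m$. Beyond that, the only technical nuisance is the possible non-invertibility of the intermediate matrices $M$ (which are random and involve the indeterminate $x$), but this is handled uniformly by noting that both sides of every identity in sight are polynomials in $x$ and in the entries involved, so equality on a Zariski-open set forces equality everywhere.
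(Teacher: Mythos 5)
Your proposal is correct and is essentially the approach the paper suggests: it explicitly notes that Theorem~\ref{thm:mixed} ``may be proved fairly easily by inductively applying an identity similar to Lemma~\ref{lem:iminusd},'' and your base lemma is precisely that identity generalized from $\E rr^* = cI$ to arbitrary covariance $A$, with the induction carried out by conditioning exactly as you describe. The only minor remark is that the Hermiticity hypothesis on $N$ and $M$ plays no role---the matrix determinant lemma and the Zariski-density argument need only invertibility on a dense set---so you could drop it, but this does not affect correctness.
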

In particular, the expected characteristic polynomial of a sum of independent
  rank one Hermitian random matrices is a function of the covariance matrices $A_i$. 
We call this polynomial the \textit{mixed characteristic polynomial} of
  $A_{1}, \dots , A_{m}$, and denote it by $\mixed{A_{1}, \dots , A_{m}}{x}$.
The name \textit{mixed characteristic polynomial} is inspired by the fact that
  the expected determinant of this matrix is called the mixed discriminant.
Notice that when $A_1=A_2=\ldots=A_m=I$, it is just a multiple
  of an associated Laguerre polynomial as in Section \ref{sec:restricted}.

Theorem \ref{thm:mixed} may be proved fairly easily by inductively applying an identity
  similar to Lemma \ref{lem:iminusd} or by appealing to the
  Cauchy-Binet formula; we refer the reader to \cite{IF2} for a short proof.
We remark that it and all of the other results in this section depend crucially 
  on the fact that the $r_ir_i^*$ are rank one, and fail rather spectacularly for 
  rank $2$ or higher matrices.

The most important consequence of Theorem \ref{thm:mixed} is that mixed
  characteristic polynomials always have real roots.
To prove this, we will need to consider a multivariate generalization of
  real-rootedness called real stability.
\begin{definition} A multivariate polynomial $f\in \R[z_1,\ldots,z_m]$ is {\em real stable}
  if it has no roots with all coordinates strictly in the upper half plane,
  i.e., if 
  $$ \imag{z_i}>0\quad\forall i\quad\Rightarrow f(z_1,\ldots,z_m)\neq 0.$$
\end{definition}
Notice that stability is the same thing as real rootedness in the univariate
  case, since complex roots occur in conjugate pairs.

A natural and relevant example of real stable polynomials is the
  following:
\begin{lemma}[\cite{BBmixedDeterminants}]\label{lem:det} If $A_1,\ldots,A_m$ are positive semidefinite
matrices, then
$$ f(z_1,\ldots,z_m)=\det\left(\sum_{i=1}^m z_iA_i\right)$$
is real stable.
\end{lemma}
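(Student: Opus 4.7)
The plan is to show that $f(z_1,\dots,z_m) = \det(\sum_i z_i A_i)$ has no zeros when every $z_i$ lies in the open upper half plane. I would first reduce to the case where each $A_i$ is strictly positive definite by considering $A_i^{(\epsilon)} := A_i + \epsilon I$ for $\epsilon > 0$. The resulting polynomials $f_\epsilon$ converge uniformly on compact subsets of $\C^m$ to $f$, so by Hurwitz's theorem (applied coordinatewise on the connected open polydomain where each coordinate has positive imaginary part), stability of $f_\epsilon$ for every $\epsilon>0$ forces $f$ to be either nonvanishing on that domain or identically zero. Hence it suffices to verify the strictly positive definite case.

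So assume each $A_i$ is positive definite and fix $z = (z_1,\dots,z_m)$ with $\Im z_j = y_j > 0$ and $\Re z_j = x_j$. Write
\[
\sum_{j=1}^m z_j A_j = X + iY, \qquad X := \sum_j x_j A_j, \qquad Y := \sum_j y_j A_j.
\]
Then $X$ is Hermitian (real linear combination of Hermitian matrices), and $Y$ is a positive combination of positive definite matrices, hence positive definite. The key algebraic step is the simultaneous reduction
\[
X + iY = Y^{1/2}\bigl(Y^{-1/2} X Y^{-1/2} + i I\bigr) Y^{1/2},
\]
which uses that $Y$ has a positive definite square root. The matrix $M := Y^{-1/2} X Y^{-1/2}$ is Hermitian, so its eigenvalues are real; consequently the eigenvalues of $M + iI$ all have imaginary part equal to $1$ and are in particular nonzero. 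Therefore $M + iI$ is invertible, and combined with invertibility of $Y^{1/2}$ we conclude $\det(X+iY) \neq 0$, which is exactly what was needed.

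There is no real obstacle in this argument; the only subtlety is the boundary behavior handled by the perturbation step, which is needed because for merely PSD $A_i$ the matrix $Y = \sum_j y_j A_j$ could fail to be invertible (indeed $f$ could even be identically zero if the $A_i$ share a common kernel, in which case the conclusion holds vacuously under the limiting convention). The heart of the lemma is the Hermitian decomposition trick, which converts a multivariate nonvanishing question into the trivial fact that Hermitian matrices have real spectra.
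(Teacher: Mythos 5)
Your proof is correct. The paper itself offers no proof of this lemma---it is quoted directly from Borcea and Br\"{a}nd\'{e}n---and your argument is the standard one for this fact: perturb to the strictly positive definite case, write $\sum_j z_j A_j = X + iY$ with $X$ Hermitian and $Y \succ 0$, factor as $Y^{1/2}\bigl(Y^{-1/2}XY^{-1/2} + iI\bigr)Y^{1/2}$ so that nonvanishing of the determinant reduces to the fact that a Hermitian matrix has real spectrum, and then recover the PSD case by Hurwitz's theorem. The one caveat is the degenerate situation you already flag: if the $A_i$ share a common kernel vector the polynomial is identically zero, so one must either adopt the convention that the zero polynomial counts as stable or exclude this case; this is harmless for every use in the paper, since there the determinant $\det\bigl(xI + \sum_i z_i A_i\bigr)$ contains the identity among its coefficient matrices and is monic in $x$, hence never identically zero.
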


One reason real stability is such a useful notion for us is that it has
  remarkable closure properties which are extremely well-understood
In particular, Borcea and \Branden \ have completely characterized the linear
  operators preserving real stability~\cite{BBWeylAlgebra}.
What this means heuristically is that proofs of stability can often be
  reduced to a formal exercise:  to prove that a particular polynomial is stable,
  one must simply write it as a composition of known stability-preserving
  operations. 

To prove that mixed characteristic polynomials are real stable, we will only
  require the following elementary closure properties.
\begin{lemma}[Closure Properties] If $f(z_1,\ldots,z_m)$ is real stable, then
so are 
$$(1-\partial_{z_i})f\qquad\textrm{for every $i$}$$
and 
$$f(\alpha,z_2,\ldots,z_m)\qquad\textrm{for every $\alpha\in\R$}.$$
\end{lemma}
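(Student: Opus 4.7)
The plan is to handle the two closure properties separately, reducing each multivariate claim to a univariate statement by slicing $f$ along an appropriate copy of $\C$. The specialization claim will follow from Hurwitz's theorem applied to an approximation from inside the upper half plane, while the differential-operator claim will follow from a direct logarithmic-derivative argument that is essentially the multivariate analogue of Lemma \ref{lem:iminusdproperties}(1).

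For the second property, fix $\alpha\in\R$ and write $g(z_2,\ldots,z_m):=f(\alpha,z_2,\ldots,z_m)$. Given an arbitrary point $(z_2^*,\ldots,z_m^*)$ with $\imag{z_j^*}>0$ for every $j\ge 2$, I would form the family $h_\epsilon(z_2,\ldots,z_m):=f(\alpha+i\epsilon,z_2,\ldots,z_m)$ for $\epsilon>0$. Each $h_\epsilon$ is nonzero at $(z_2^*,\ldots,z_m^*)$ by real stability of $f$, since $\imag{\alpha+i\epsilon}=\epsilon>0$. Letting $\epsilon\to 0^+$, $h_\epsilon\to g$ uniformly on compact sets; by (a multivariate form of) Hurwitz's theorem, $g$ is either identically zero or itself nonvanishing on the open upper poly-halfspace, and both alternatives are real stable. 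I would justify the multivariate Hurwitz step by restricting each $h_\epsilon$ to a complex line through $(z_2^*,\ldots,z_m^*)$ and appealing to the classical one-variable version.

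For the first property, it suffices by symmetry to handle $i=1$. I would argue by contradiction: suppose $(1-\partial_{z_1})f$ vanishes at some $(z_1^*,\ldots,z_m^*)$ with every $\imag{z_j^*}>0$, and consider the univariate slice $p(w):=f(w,z_2^*,\ldots,z_m^*)$. Stability of $f$ places every root of $p$ in the closed lower half plane, and the vanishing hypothesis becomes $p(z_1^*)=p'(z_1^*)$. If $p(z_1^*)=0$ then $z_1^*$ is a double root of $p$ in the open upper half plane, contradicting stability. Otherwise $p'(z_1^*)/p(z_1^*)=1$; but by partial fractions this logarithmic derivative equals $\sum_j 1/(z_1^*-\lambda_j)$, where the roots $\lambda_j$ of $p$ satisfy $\imag{\lambda_j}\le 0<\imag{z_1^*}$. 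Each summand then has strictly negative imaginary part, so the sum cannot equal the real number $1$.

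The one subtle point is the identically-zero alternative in Hurwitz's theorem for the specialization step; this is standard and I would treat it either by declaring the zero polynomial to be (vacuously) real stable or by verifying that it does not arise in our applications. Beyond that, everything reduces cleanly to univariate facts that are either classical or were already proved in Lemma \ref{lem:iminusdproperties}(1); in particular, the derivative argument never uses real coefficients of $f$, only the location of the roots of the univariate slice $p$.
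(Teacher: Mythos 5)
Your proposal is correct and follows essentially the same route the paper sketches: for the $(1-\partial_{z_i})$ part, a univariate restriction in $z_i$ and analysis of the logarithmic derivative $p'/p=\sum_j 1/(z-\lambda_j)$ (the Lieb--Sokal argument), and for the specialization part, perturbing $\alpha$ into the open upper half plane and invoking Hurwitz's theorem. Your imaginary-part argument is the appropriate completion of the paper's pointer to the univariate proof of Lemma \ref{lem:iminusdproperties}, since the sliced polynomial has complex roots confined to the closed lower half plane and the intermediate-value reasoning must be replaced by exactly the sign-of-imaginary-part observation you give.
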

The first part was essentially established by Lieb and Sokal in \cite{LiebSokal}.
It follows easily by considering a univariate restriction to $z_i$ and
  studying the associated rational function, as in the the (entirely univariate) proof of
  Lemma \ref{lem:iminusdproperties}.
The second part is trivial for $\alpha$ strictly in the upper half plane, and
  may be extended to the real line by appealing to Hurwitz's theorem.

Combining these properties with Theorem 4.1 instantly establishes the following important fact.
\begin{theorem}\label{thm:mixedreal} If $A_1,\ldots,A_m$ are positive
semidefinite, then $\mixed{A_1,\ldots,A_m}{x}$ is real-rooted.
\end{theorem}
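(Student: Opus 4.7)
The plan is to combine Theorem~\ref{thm:mixed} with Lemma~\ref{lem:det} and the listed closure properties, reducing real-rootedness to a formal stability check in $m+1$ variables. The whole argument fits the "heuristic" described just before the statement: build $\mixed{A_1,\dots,A_m}{x}$ as a composition of stability-preserving operations applied to a polynomial that is visibly real stable, then restrict to a single variable.

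First I would consider the $(m+1)$-variate polynomial
\[
g(x,z_1,\dots,z_m) \;=\; \mydet{x I + \sum_{i=1}^{m} z_i A_i}.
\]
Since $I$ and each $A_i$ are positive semidefinite, this is exactly the kind of determinantal polynomial covered by Lemma~\ref{lem:det} (with the $m{+}1$ PSD matrices $I,A_1,\dots,A_m$ and variables $x,z_1,\dots,z_m$), so $g$ is real stable in $(x,z_1,\dots,z_m)$.

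Next I would apply the two closure properties in turn. By the first property, each operator $(1-\partial_{z_i})$ preserves real stability, so applying the whole product $\prod_{i=1}^m(1-\partial_{z_i})$ keeps us in the real stable class; call the resulting polynomial $\tilde g(x,z_1,\dots,z_m)$. By the second property, real stability is preserved under setting any variable to a real number, and iterating this $m$ times to set $z_1=\cdots=z_m=0$ yields a polynomial in $x$ alone, which by Theorem~\ref{thm:mixed} is precisely $\mixed{A_1,\dots,A_m}{x}$. Hence $\mixed{A_1,\dots,A_m}{x}$ is real stable as a univariate polynomial.

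Finally I would observe that real stability in one variable is the same as real-rootedness: a nonreal root $\zeta$ would, together with its complex conjugate $\bar\zeta$ (the coefficients being real), force the polynomial to vanish at some point of the open upper half-plane, contradicting stability. This gives the theorem. There is no real obstacle here beyond bookkeeping: the only subtlety is making sure we treat $x$ on the same footing as the $z_i$ when invoking Lemma~\ref{lem:det}, so that the differential operators and the specialization $z_i\mapsto 0$ can legitimately be applied inside the real stable class before we finally read the result as a univariate statement in $x$.
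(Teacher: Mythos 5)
Your proposal is correct and is essentially the paper's own argument: apply Lemma~\ref{lem:det} to the $m+1$ positive semidefinite matrices $I,A_1,\dots,A_m$ to get stability of $\det(xI+\sum_i z_iA_i)$, then use the two closure properties (the $(1-\partial_{z_i})$ operators and real specializations $z_i=0$) together with the identity of Theorem~\ref{thm:mixed}, and finish by noting that univariate real stability is real-rootedness. No gaps; this matches what the paper means by ``combining these properties'' with the mixed characteristic polynomial identity.
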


We are now in a position to prove Theorem \ref{thm:comparison}. As in Lemma
  \ref{lem:induct}, we will do this inductively by showing 
  that the relevant ``conditional expectation'' polynomials have common
  interlacings.
However, instead of explicitly finding these common interlacings using Cauchy's
  theorem, we will guarantee their existence implicitly using Theorem
  \ref{thm:mixedreal}.
\begin{proof}[Proof of Theorem \ref{thm:comparison}]
For any partial assignment $v_1,\ldots,v_\ell$ of
  $r_1,\ldots,r_\ell$, consider the conditional expected polynomial
$$ q_{v_1,\ldots,v_\ell}(x):=\E \charp{\sum_{i=1}^\ell v_iv_i^* +
\sum_{i=\ell+1}^m r_ir_i^*}{x}.$$
Suppose $r_{\ell+1}$ is supported on $w_1,\ldots,w_N$. Then, for all convex
  coefficients $\sum_{i=1}^N \mu_i=1, \mu_i\ge 0$, the convex combination
  $$ \sum_{i=1}^N \mu_i q_{v_1,\ldots,v_\ell,w_i}(x)$$
is itself a mixed characteristic polynomial, namely
$$\mixed{v_1v_1^*,\ldots,v_\ell v_\ell^*, \sum_{i=1}^N \mu_i w_iw_i^*, \E
r_{\ell+2}r_{\ell+2}^*,\ldots,\E r_mr_m^*}{x},$$
which has real roots by Theorem \ref{thm:mixedreal}. This establishes
that the $q_{v_1,\ldots,v_\ell,w_i}(x)$ have a
  common interlacing, which by Theorem \ref{thm:interlace} implies that for
  every $k$ there exists an $i\in [N]$ for which 
  $$ \lambda_k\left(q_{v_1,\ldots,v_m,w_i}(x)\right) \le
  \lambda_k\left(q_{v_1,\ldots,v_m}(x)\right),$$
completing the induction.\end{proof}
The above proof highlights the added flexibility of allowing the $r_i$ to have
  different distributions: by taking some of these distributions to be
  deterministic, we can encode any conditioning and more generally any addition
  of a positive semidefinite matrix while remaining in the class of mixed
  characteristic polynomials.

\section{Analysis of Expected Polynomials}
In this section, we describe two situations in which we are able to bound
  the largest roots of mixed characteristic polynomials.
The first is very specific: we observe that the expected characteristic
  polynomial of a random signing of an adjacency matrix of a graph is
   equal, up to a shift, to the matching polynomial of the graph.
The zeros of this polynomial have been studied for decades and elementary
  combinatorial arguments due to Heilmann and Lieb \cite{heilmannLieb} can be used to give
  a sharp bound on its largest root.
The main consequence of this bound is the existence of infinite families of
  bipartite Ramanujan graphs of every degree.

The second situation is almost completely general. We show that given any
  collection of matrices satisfying $\sum_{i=1}^m A_i=I$, the mixed characteristic
  polynomial $\mixed{A_1,\ldots,A_m}{x}$ has roots bounded by $(1+\sqrt{\max_i
  \tr(A_i)})^2$.
This is achieved by a direct multivariate generalization of the barrier function
  argument that we used in Section \ref{sec:restricted} to upper bound the roots
  of associated Laguerre polynomials.
The main consequence of this bound is a proof of Weaver's conjecture and thereby
  a positive solution to the Kadison--Singer problem.\\

\subsection{Matching Polynomials}\label{sec:matchingpoly}
We are now ready to prove the bound \eqref{eqn:IF1} and thereby Theorem \ref{thm:IF1}.
For any $d-$regular graph $G=(V,E)$, let the random vectors
  $\{r_{(a,b)}\}_{(a,b)\in E}$ be defined as in \eqref{eqn:rabdef}.
Applying Theorem \ref{thm:comparison} with $k=1$ and subtracting $d$ from both
  sides, we find that:
\begin{align*}\lambda_{max}\left(\sum_{(a,b)\in E}
r_{(a,b)}r_{(a,b)}^*-dI\right)
&= \lambda_{max}\left(\sum_{(a,b)\in E} r_{(a,b)}r_{(a,b)}^*\right)-d
\\&\le
\lambda_{max}\left(\E\charp{\sum_{(a,b)\in E} r_{(a,b)}r_{(a,b)}^*}{x}\right)-d
\\&=\lambda_{max}\left(\E\charp{\sum_{(a,b)\in E}
r_{(a,b)}r_{(a,b)}^*-dI}{x}\right),\end{align*}
with positive probability.
Switching back to signed adjacency matrices by applying
  \eqref{eqn:signedlaplacian}, we conclude that 
\begin{equation}\label{eqn:goodsigning}\lambda_{max}(A_s)\le\lambda_{max}\left(\E\charp{A_s}{x}\right)\end{equation}
  with positive probability for a uniformly random signing $A_s$.

We now observe that this expected characteristic polynomial is equal to 
  the {\em matching polynomial} of the graph.
A matching is a graph in which every vertex has degree at most one.
The matching polynomial is a generating function which counts the number of
matchings that are subgraphs of a graph; for a graph on $n$ vertices, it is defined as
  $$\mu_G(x) := \sum_{i=0}^{\lfloor n/2\rfloor} (-1)^ix^{n-2i}m_i,$$
where $m_i$ is the number of subgraphs of $G$ with  $i$ edges that are matchings.

Godsil and Gutman \cite{GodsilGutman} showed that the matching polynomial of a graph is
  equal to the expected characteristic polynomial of a random signing of its
  adjacency matrix:
\begin{equation}\label{eqn:gg}  \E\charp{A_s}{x}=\mu_G(x).\end{equation}
This identity may be proved easily by expanding $\charp{A_s}{x}=\det(xI-A_s)$ as
  a sum of permutations and observing that the only terms that do not vanish
  are the permutations with all orbits of size two, which correspond to the matchings.

About a decade before this, Heilmann and Lieb \cite{heilmannLieb} studied the matching
  polynomial in the context of monomer-dimer systems in statistical physics.
In that paper, they showed that $\mu_G(x)$ always has all real roots (a fact
  which we have also just proved by writing it as a shift of a mixed
  characteristic polynomial), and that
\begin{equation}\label{eqn:hl} \lambda_{max}(\mu_G(x))\le
  2\sqrt{d-1}\end{equation}
  for a graph with maximum degree $d$.
They proved this bound by finding 
  certain simple combinatorial recurrences satisfied by $\mu_G(x)$, induced by edge and vertex
  deletions.
The appearance of the number $2\sqrt{d-1}$ is not a coincidence; Godsil
  \cite{godsil} later showed using similar recurrences that $\mu_G(x)$ divides the
  characteristic polynomial of a certain tree associated with $G$, which is an
  induced subgraph of the infinite $d-$regular tree.

Combining \eqref{eqn:goodsigning}, \eqref{eqn:gg}, and \eqref{eqn:hl} yields
  Theorem \ref{thm:IF1}. 
There is also a generalization of this theorem which proves the existence of
  ``irregular'' Ramanujan graphs, which were not previously known to exist; we
  refer the interested reader to \cite{IF1} for details.
\subsection{The Multivariate Barrier Argument} \label{sec:multibarrier}
The tight bound of $2\sqrt{d-1}$ obtained above relies heavily
  on the fact that the random vectors $r_{(a,b)}$ of interest come from a graph
  and have combinatorial structure.
Remarkably, it turns out that we can prove a bound that is almost as sharp
  by completely ignoring this structure and relying only on the much weaker 
  property that the $rr^*$ are rank one matrices of bounded trace.
This type of generic bound is precisely what one needs to control the roots of the quite
  general mixed characteristic polynomials which arise in the proof of Weaver's
  conjecture, and thereby prove Kadison--Singer.
\begin{theorem}\label{thm:mixedbound} Suppose $A_1,\ldots,A_m$ are positive semidefinite matrices with
$\sum_{i=1}^mA_i=I$ and $\tr(A_i)\le\epsilon$. Then, 
\begin{equation}\label{eqn:mixedbound}\lambda_{max}\left(\mixed{A_1,\ldots,A_m}{x}\right)\le
(1+\sqrt{\epsilon})^2.\end{equation}
\end{theorem}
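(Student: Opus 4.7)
The strategy is to promote the univariate barrier argument of Section~\ref{sec:unibarrier} to a multivariate one. By Theorem~\ref{thm:mixed}, setting $Q(x, z_1, \ldots, z_m) := \det(xI + \sum_i z_i A_i)$, we have
$$\mixed{A_1, \ldots, A_m}{x} = \prod_{i=1}^m (1 - \partial_{z_i}) \, Q(x, z) \Big|_{z = 0},$$
and $Q$ is real stable by Lemma~\ref{lem:det}. The crucial consequence of the hypothesis $\sum_i A_i = I$ is the diagonal translation identity
$$Q(x, z_1 + \delta, \ldots, z_m + \delta) = Q(x + \delta, z_1, \ldots, z_m);$$
since the operators $(1 - \partial_{z_i})$ commute with shifts in $z_i$, this lifts to $\prod_i (1 - \partial_{z_i}) Q\,|_{z = \delta \mathbf{1}} = \mixed{A_1, \ldots, A_m}{x + \delta}$, so a shift in all the $z_i$ in tandem may be traded for a shift in $x$.

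Next, for a real stable polynomial $p(x, z_1, \ldots, z_m)$ I would define multivariate upper barriers $\Phi^j_p(x, z) := \partial_{z_j} p / p$ for each $j$, and call a point $(x_0, z_0)$ \emph{above all roots of $p$} if $p(x_0 + t_0, z_0 + t) > 0$ for all $t_0 \geq 0$ and $t \in \R^m_{\geq 0}$. On this region each $\Phi^j_p$ is positive, monotone decreasing in every coordinate, and convex in every coordinate; these standard logarithmic-derivative properties for real stable polynomials can be verified by restricting to a real line and applying the univariate computation of Section~\ref{sec:unibarrier}.

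The heart of the argument is a multivariate analogue of Lemma~\ref{lem:uppersmax}: if $(x_0, z_0)$ is above all roots of $p$ and $\Phi^i_p(x_0, z_0) \leq \varphi < 1$, then for $\delta := 1/(1 - \varphi)$ the point $(x_0, z_0 + \delta e_i)$ lies above all roots of $(1 - \partial_{z_i}) p$, and $\Phi^j_{(1 - \partial_{z_i}) p}(x_0, z_0 + \delta e_i) \leq \varphi$ for \emph{every} $j$. The $j = i$ case is formally the univariate Lemma~\ref{lem:uppersmax} applied along the $z_i$-line. Preserving the barrier bound for $j \neq i$ requires the mixed-partial inequalities for $\log p$ above its roots — specifically that a shift in the $z_i$ direction cannot inflate a different barrier — and is the main technical obstacle. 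Real stability of $(1 - \partial_{z_i}) p$, guaranteed by the closure of stability under the operator $(1-\partial_{z_i})$, keeps us inside the framework at each step.

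Equipped with this shift lemma, start at $(x_0, 0)$ with $p = Q$. By Jacobi's formula,
$$\Phi^j_Q(x_0, 0) = \frac{\tr(A_j)}{x_0} \leq \frac{\epsilon}{x_0}.$$
Choose $\varphi = \sqrt{\epsilon}/(1 + \sqrt{\epsilon})$ and $x_0 = \epsilon/\varphi = \sqrt{\epsilon}(1 + \sqrt{\epsilon})$, so every initial barrier is at most $\varphi$. Applying the shift lemma $m$ times, once per variable $z_i$ with step $\delta = 1/(1 - \varphi) = 1 + \sqrt{\epsilon}$, we arrive at the point $(x_0, \delta, \ldots, \delta)$ lying above all roots of $\prod_i (1 - \partial_{z_i}) Q$. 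By the translation identity this is precisely the statement that $\mixed{A_1, \ldots, A_m}{x_0 + \delta + s} > 0$ for every $s \geq 0$, whence
$$\lambda_{max}\left(\mixed{A_1, \ldots, A_m}{x}\right) \leq x_0 + \delta = \sqrt{\epsilon}(1+\sqrt{\epsilon}) + (1 + \sqrt{\epsilon}) = (1 + \sqrt{\epsilon})^2.$$
The principal difficulty is the multivariate shift lemma, particularly the control of the cross-barriers $\Phi^j$ for $j \neq i$ under a $z_i$-shift; once that is in place the rest is a direct repackaging of the univariate Laguerre calculation combined with the algebraic identity that exploits $\sum_i A_i = I$.
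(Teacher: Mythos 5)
Your overall route is essentially the paper's: express $\mixed{A_1,\ldots,A_m}{x}$ via Theorem~\ref{thm:mixed}, exploit $\sum_i A_i = I$ (your translation identity is exactly the paper's device of trading a simultaneous shift of all the $z_i$ for a shift of $x$), and run a multivariate barrier induction, applying a shift lemma once per variable. The bookkeeping is right and matches \cite{IF2}: with $\varphi=\sqrt{\epsilon}/(1+\sqrt{\epsilon})$, $x_0=\epsilon+\sqrt{\epsilon}$ and $\delta=1+\sqrt{\epsilon}$ one gets $x_0+\delta=(1+\sqrt{\epsilon})^2$, and Jacobi's formula does give $\Phi^{Q}_j(x_0,0)=\tr(A_j)/x_0$.

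The genuine gap is in how you propose to justify the shift lemma, i.e.\ the control of the cross-barriers $\Phi^j$ for $j\neq i$ under a $z_i$-shift. You correctly flag this as the main obstacle, but your suggested justification --- that positivity, monotonicity and convexity of $\Phi^j_p$ in every coordinate direction ``can be verified by restricting to a real line and applying the univariate computation of Section~\ref{sec:unibarrier}'' --- fails precisely in the cross case $i\neq j$: the restriction of $\Phi^j_p=\partial_{z_j}p/p$ to a line in the $z_i$ direction is not the logarithmic derivative of any univariate real-rooted polynomial (its numerator and denominator are restrictions of two different polynomials), so the argument behind Lemma~\ref{lem:uppersmax} does not apply to it. The statements $\partial_{z_i}\Phi^j_p\le 0$ and $\partial_{z_i}^2\Phi^j_p\ge 0$ above the roots are genuinely multivariate facts about real stable polynomials; in \cite{IF2} they are proved by passing to bivariate restrictions and invoking the Helton--Vinnikov determinantal representation \cite{heltonvinnikov} (with elementary alternatives found later by Renegar and by Tao), and this is exactly the ingredient the survey says is ``less obvious.'' Without it, your multivariate shift lemma --- and hence the whole induction --- is unproven. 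Two smaller repairs: the hypothesis of your shift lemma should be that $\Phi^j_p(x_0,z_0)\le\varphi$ for \emph{all} $j$ (not only $j=i$), since the conclusion for $j\neq i$ is that those barriers do not increase; and you should record that the starting point $(x_0,0)$ is above the roots of $Q$, which holds because $x_0>0$ and the $A_i$ are positive semidefinite.
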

At a high level, the proof of this theorem is very similar to that of Lemma
  \ref{lem:upperLaguerre}: we express $\mixed{A_1,\ldots,A_m}{x}$ as a product of
  differential operators applied to some nice initial polynomial, and show that each
  differential operator perturbs the roots in a predictable way.
The difference is that the differential operators and roots are
  now multivariate rather than univariate.

To deal with this issue, we begin by defining a notion of multivariate
  upper bound: we say that $b\in\R^m$ is {\em above} the roots of a real stable polynomial
  $f(z_1,\ldots,z_m)$ if $f(z)>0$ for all $z\ge b$ coordinate-wise.
It is best to think of an ``upper bound'' for the roots of $f$ as a set rather
  than as a single point --- the set of all points above the roots of $f$.

As we did in the univariate case, we soften this notion by studying certain
  rational functions associated with $f$ which interact naturally with the
  $(1-\partial_{z_j})$ operators we are interested in.
For each coordinate $j$, define the {\em multivariate barrier function}
$$ \Phi^f_j(z_1,\ldots,z_m) = \frac{\partial z_j f(z_1,\ldots,z_m)}{f(z_1,\ldots,z_m)},$$
and notice that 
$$\Phi^f_j(z_1,\ldots,z_m)=\sum_{i=1}^d \frac{1}{z_j-\lambda_i},$$
where $\lambda_1,\ldots,\lambda_d$ are the roots of the univariate restriction
  obtained by fixing all the coordinates other than $z_j$.

For a sensitivity parameter $\varphi<1$, we define a {\em $\varphi$-robust upper
  bound} on $f(z_1,\ldots,z_m)$ to be any point $b$ above the roots of $f$ with
  $\Phi^f_j(b)\le\varphi$ for all $j$.
We denote the set of all such robust upper bounds by $\smax_\varphi(f)$.
The following multivariate analogue of Lemma \ref{lem:uppersmax}  holds
  for $\smax$.
It says that applying an $(1-\partial_{z_j})$ operator simply
  moves the set of robust upper bounds in direction $j$ by a small amount.
\begin{lemma}\label{lem:smax} If $f(z_1,\ldots,z_m)$ is real stable and
$\varphi<1$, then
$$\smax_{\varphi}\left((1-\partial_{z_j})f\right)\supseteq
\smax_{\varphi}(f)+\frac{1}{1-\varphi}\cdot e_j,$$
where $e_j$ is the elementary basis vector in direction $j$.
\end{lemma}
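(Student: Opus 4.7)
The plan is to mimic the univariate argument of Lemma \ref{lem:uppersmax} in the multivariate setting. Fix $b \in \smax_\varphi(f)$, set $\delta := 1/(1-\varphi)$, and write $b' := b + \delta\, e_j$. I need to verify (i) that $b'$ is above the roots of $(1-\partial_{z_j})f$, and (ii) that $\Phi^{(1-\partial_{z_j})f}_i(b') \le \varphi$ for every coordinate $i$.

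Starting from the factorization $(1-\partial_{z_j})f = f\,(1 - \Phi^f_j)$ and using the symmetry $\partial_{z_i}\Phi^f_j = \partial_{z_j}\Phi^f_i$ of mixed partials of $\log f$, a short logarithmic-derivative calculation yields the multivariate analogue of \eqref{eqn:barrierupdate}:
$$
\Phi^{(1-\partial_{z_j})f}_i \;=\; \Phi^f_i \;-\; \frac{\partial_{z_j}\Phi^f_i}{1 - \Phi^f_j}.
$$
For the diagonal bound ($i = j$), restrict $f$ to the univariate slice $g(t) := f(b_1,\ldots,t,\ldots,b_m)$. Real stability of $f$ forces $g$ to be real-rooted, and by construction $\Phi^g(b_j) = \Phi^f_j(b) \le \varphi$, so the univariate Lemma \ref{lem:uppersmax} applied to $g$ at the point $b_j$ delivers both the above-the-roots property along the $z_j$-axis and the barrier bound $\Phi^{(1-\partial_{z_j})f}_j(b') \le \varphi$ directly.

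For the off-diagonal bound ($i \ne j$), I would invoke three structural properties that hold for real stable $f$ above its roots and that play the role of the partial-fraction properties used in the univariate proof: the restriction $t \mapsto \Phi^f_i(b + t e_j)$ is non-negative, monotone decreasing, and convex. Monotonicity at once gives $\Phi^f_j(b') \le \Phi^f_j(b) \le \varphi$ and $\partial_{z_j}\Phi^f_i(b') \le 0$, and convexity of the restriction combined with the choice $\delta = 1/(1 - \varphi)$ yields
$$
\Phi^f_i(b) - \Phi^f_i(b') \;\ge\; -\delta\,\partial_{z_j}\Phi^f_i(b') \;\ge\; \frac{-\partial_{z_j}\Phi^f_i(b')}{1 - \Phi^f_j(b')},
$$
which rearranges to $\Phi^{(1-\partial_{z_j})f}_i(b') \le \Phi^f_i(b) \le \varphi$, as desired. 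Claim (i) then follows from the same factorization: both $f$ and $1-\Phi^f_j$ are strictly positive at every $z \ge b'$, the first because $z \ge b$ and the second because $\Phi^f_j(z) \le \Phi^f_j(b) \le \varphi < 1$ by monotonicity.

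The main obstacle is establishing the off-diagonal monotonicity-and-convexity input for $\Phi^f_i$ along the $z_j$-direction. In the univariate case (Lemma \ref{lem:uppersmax}) these were immediate from a partial-fraction expansion of $\Phi^f$, but in the multivariate setting they require a genuine use of real stability, most naturally proved by freezing all but two coordinates to reduce to a bivariate real stable polynomial and then analyzing its univariate coefficient polynomials via interlacing. Once these facts are in hand, the shift factor $1/(1-\varphi)$ emerges, exactly as in the univariate proof, from the bound $\delta \ge 1/(1-\Phi^f_j(b'))$.
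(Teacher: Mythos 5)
Your proposal is correct and follows essentially the same route as the paper: the factorization $(1-\partial_{z_j})f = f(1-\Phi^f_j)$ and the resulting multivariate barrier-update identity, the shift $\delta = 1/(1-\varphi)$, and the monotonicity and convexity of the coordinate barrier functions above the roots, which is exactly where real stability enters. The ingredient you flag as the ``main obstacle'' is precisely the one the paper also does not prove here but defers to \cite{IF2}, where it is obtained from the Helton--Vinnikov determinantal representation of bivariate restrictions \cite{heltonvinnikov} (with later elementary proofs by Renegar and Tao), so your sketch matches the paper's argument in all essentials.
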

The proof of this lemma is syntactically almost identical to that of Lemma
  \ref{lem:uppersmax}, except that it is less obvious
  that the barrier functions $\Phi^f_j$ are monotone and
  convex in the coordinate directions.
In \cite{IF2} we prove this by appealing to a powerful representation theorem
  of Helton and Vinnikov \cite{heltonvinnikov}, which says that bivariate
  restrictions of real stable polynomials can always be written as determinants
  of positive semidefinite matrices, which are easy to analyze.
Later, elementary proofs of this fact were given by James Renegar (using tools
  from the theory of hyperbolic polynomials \cite{BGLS}) and Terence Tao
  (using a combination of elementary calculus and complex analysis, along with
  Bezout's theorem).

With Lemma \ref{lem:smax} in hand, one can prove Theorem
  \ref{thm:mixedbound} by an induction similar to the one we used in Lemma
  \ref{lem:lowerbarrier}.
We refer the reader to \cite{IF2} for details.

Applying Theorems \ref{thm:comparison} and \ref{thm:mixedbound} to the random
  vectors defined in \eqref{eqn:weaverpartition} immediately yields Theorem
  \ref{thm:mss2main}.

\section{Ramanujan Graphs and  Weaver's Conjecture}
We conclude by showing how the generic bound derived above may be used to
  analyze the random signings that occur in the proof of Theorem \ref{thm:IF1}.
This turns out to be very instructive and is quite natural, since when $G=(V,E)$ is $d-$regular, 
  \eqref{eqn:signisotropic} tells us that
  $$ \E \sum_{(a,b)\in E} \frac{r_{(a,b)}r_{(a,b)}^*}{d}= I.$$
Thus, each vector has the same norm $\|r_{(a,b)}\|^2 = 2/d$, and applying
  Theorems \ref{thm:comparison} and \ref{thm:mixedbound} shows that
  $$\sum_{(a,b)\in E} {r_{(a,b)}r_{(a,b)}^*}\le
  d\left(1+\sqrt{\frac{2}{d}}\right)^2 = d+2+2\sqrt{2d}$$
  with positive probability.
This bound has asymptotically the same dependence on $d$ as the correct bound
  established using matching polynomials.
Moreover, it immediately proves that the dependence on $\epsilon$ in Theorem
  \ref{thm:mixedbound} cannot be improved: if it could, the above argument would
  imply the existence of signings with largest eigenvalue $o(\sqrt{d})$,
  contradicting the Alon--Boppana bound. 
Thus, the matrices arising in the study of Ramanujan graphs witness the
  sharpness of our bounds on mixed characteristic polynomials.
\footnote{We remark that the Alon--Boppana bound can also be used to show that the
  dependence on $\alpha$ in Theorem \ref{thm:mss2main} itself is tight by recursively
  applying it to the Laplacian of the complete graph. We point the reader to
  \cite{blogpost} or \cite{harveyOliver} for a complete argument.}


\end{document}